\documentclass{article}
\usepackage{amsthm,amsfonts,amsbsy,amssymb,amsmath,graphicx}
\usepackage{mathrsfs}
\usepackage[all]{xy}

\DeclareMathAlphabet{\mathdutchcal}{U}{dutchcal}{m}{n}

\newtheorem{theorem}{Theorem}
\newtheorem*{theorem*}{Theorem}

\newtheorem{corollary}{Corollary}

\newtheorem{proposition}{Proposition}
\theoremstyle{definition}
\newtheorem{definition}{Definition}
\theoremstyle{remark}
\newtheorem{remark}{Remark}
\newtheorem{example}{Example}

\def\Z{{\mathbb Z}}

\def\R{{\mathbb R}}

\newcommand{\id}{\mathrm{id}}
\DeclareMathOperator{\rot}{rot}
\DeclareMathOperator{\sign}{sign}

\title{Homotopy index polynomials for knotoids}
\author{Igor Nikonov}
\date{}

\begin{document}

\maketitle

\begin{abstract}
We define homotopy index polynomials and a homotopy type of knotoids in an oriented surface, and consider some basic properties of these polynomials. We show that for planar knotoids the homotopy polynomials are not weaker than previously defined index polynomials, and that the homotopy index polynomials detect non-rotatability of spherical knotoids.
\end{abstract}

\section{Introduction}

The notion of knotoids was introduced by Turaev~\cite{Turaev} as a natural extension of classical knot theory by considering open-ended diagrams. A \emph{knotoid diagram} $D$ is an oriented surface $S$ is a generic immersion of the interval $[0,1]$ in the interior of $S$ whose only singularities are transversal double points endowed
with over/undercrossing data. The images of $0$ and $1$ are called the \emph{tail} and the \emph{head} of a knotoid. Two diagrams are considered \emph{equivalent} if one can get one from the other by a finite sequence of Reidemeister moves (Fig.~\ref{fig:reidmoves}) and isotopies of the surface. An (oriented) \emph{knotoid} is an equivalence class of such diagrams. A knotoid in the plane $\mathbb R^2$ is called \emph{planar}, and a knotoid in the sphere $S^2$ is called \emph{spherical}.

\begin{figure}[h]
    \centering
    \includegraphics[width=0.5\linewidth]{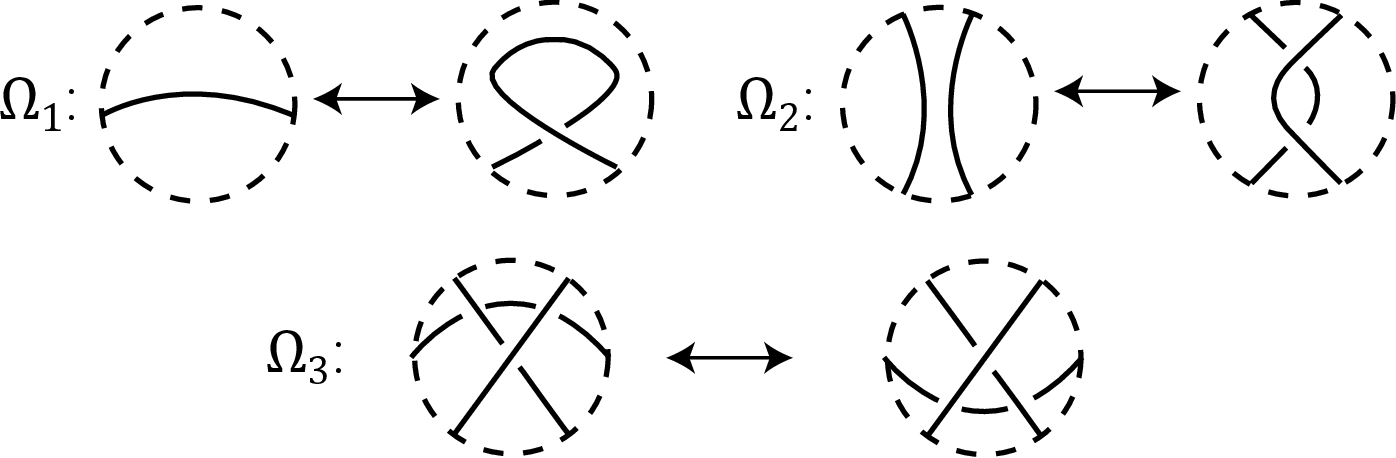}
    \caption{Reidemeister moves}
    \label{fig:reidmoves}
\end{figure}

Note that moves on knotoids must not involve the end points (Fig.~\ref{fig:forbidden_moves}).

\begin{figure}[h]
    \centering
    \includegraphics[width=0.5\linewidth]{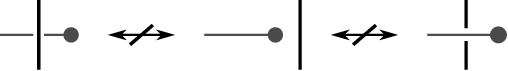}
    \caption{Forbidden moves for knotoids}
    \label{fig:forbidden_moves}
\end{figure}

The construction of knotoid invariants have become a central topic in knotoid theory. Among them we distinguish index polynomials~\cite{CAL26,FL24,FLV25,GK17,KIL18} which has the form
\[
I(D)=\sum_{c\in \mathcal C(D)} \sign(c) \iota(c).
\]
where $\mathcal C(D)$ is the set of crossings of the diagram $D$, $\sign(c)$ is the sign of the crossing (Fig.~\ref{fig:crossing_sign}), and $\iota(c)$ is an index of the crossing $c$ valued in an abelian group.
\begin{figure}[h]
    \centering
    \includegraphics[width=0.2\linewidth]{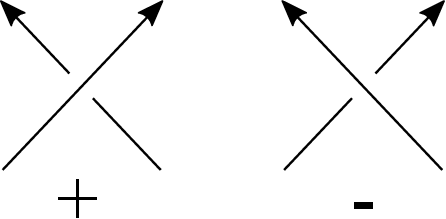}
    \caption{Sign of a crossing}
    \label{fig:crossing_sign}
\end{figure}
An \emph{index} here means a way to assign labels to the crossings of diagrams so that
\begin{itemize}
    \item the index of a crossing remains unchanged under Reidemeister moves;
    \item two crossings which participate in a second Reidemeister move, have the same index value.
\end{itemize}

Indices in different forms have been successfully used in the virtual knot theory~\cite{C,Cheng,F,FK,Henrich,ILL,J,K2,KPV,ST,T}. The notion of index was formalized by Z. Cheng~\cite{Cheng21}. 
A special form of index is parity defined by V.O. Manturov~\cite{Mparity} and later developed in a series of papers~\cite{IMN11,IMN14,IMN15}.
Reviews of index and parity can be found in~\cite{CGX,CFGMX}, see also~\cite{Nind}.

The goal of this paper to define another index polynomial. Following~\cite{Ntribe}, we consider two indices of crossings of knotoid diagrams:
\begin{itemize}
    \item \emph{order index} $o$ that splits the crossings into early undercrossings and early overcrossings (Fig.~\ref{fig:early_underovercrossing}). S. Kim, I.H. Im, and S. Lee~\cite{KIL18} used this splitting to define a two-variable index polynomial.

\begin{figure}[h]
    \centering
    \includegraphics[width=0.5\linewidth]{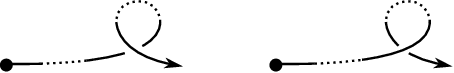}
    \caption{An early undercrossing (left) and an early overcrossing (right)}\label{fig:early_underovercrossing}
\end{figure}

    \item \emph{homotopy index} $h$ that is the homotopy type of the based half of a crossing (Fig.~\ref{fig:based_half}). 

\begin{figure}[h]
    \centering
    \includegraphics[width=0.25\linewidth]{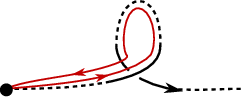}
    \caption{Based half of a crossing}
    \label{fig:based_half}
\end{figure}
\end{itemize}

Using these indices, we define the \emph{early undercrossing homotopy polynomial} $H_u$ and the \emph{early overcrossing homotopy polynomial} $H_o$ of a knotoid diagram $D$:
\[
H_u(D)=\sum_{c\in\mathcal C_u(D)}\sign(c)(h(c)-1),\quad 
H_o(D)=\sum_{c\in\mathcal C_o(D)}\sign(c)(h(c)-1).
\]
Here $\mathcal C_u(D)$ (resp., $\mathcal C_o(D)$) is the set of early undercrossings (resp., early overcrossings) of the diagram $D$. The polynomials take values in the free module generated by the set of orbits of the fundamental group of the surface with two punctures by an action of the braid group on two strands.

The paper is organized as follows. In Section~\ref{sec:definition} we define the homotopy polynomials and homotopy type of knotoids in an oriented surface, and consider some basic properties of the polynomials. In Section~\ref{sec:planar_knotoids} we consider the case of planar knotoids, demonstrate how the homotopy polynomials are computed, and show that they are not weaker than previously defined index polynomials. Section~\ref{sec:spherical_knotoids} is devoted to application of the invariant to the spherical knotoid. In particular, we show that homotopy index polynomials detect non-rotatability.


\section{Homotopy index polynomials}\label{sec:definition}

\subsection{Index}

Let us define an index as in the paper~\cite{Nif}.

\begin{definition}
    An \emph{index} $\iota$ with coefficients in a set $I$ is a family of maps $\iota_D\colon\mathcal C(D)\to I$, $D$ is a knotoid diagram, such that
\begin{itemize}
\item[(I0)] for any diagrams $D$ and $D'$ connected by a Reidemeister move, and any crossing $c\in\mathcal C(D)$ that survives the move, one has $\iota_{D}(c)=\iota_{D'}(c)$;
\item[(I2)] $\iota_D(c_1)=\iota_D(c_2)$ for any crossings $c_1,c_2\in\mathcal C(D)$ to which a decreasing second Reidemeister move can be applied.
\end{itemize}

We suppose that crossings participating in a third Reidemeister move survive since a correspondence can be set up between them as shown in Fig.~\ref{fig:R3crossings}.

\begin{figure}[h]
    \centering
    \includegraphics[width=0.4\linewidth]{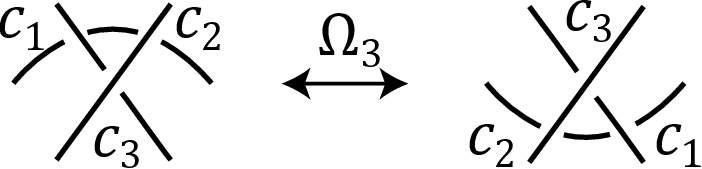}
    \caption{Crossings in a third Reidemeister move}
    \label{fig:R3crossings}
\end{figure}

An index $\iota$ with coefficients in an abelian group $A$ is called \emph{$\Omega_1$-reduced} if
\begin{itemize}
\item[(I1)] $\iota_D(c)=0$ for any crossing $c\in\mathcal C(D)$ to which a decreasing first Reidemeister move can be applied.
\end{itemize}
\end{definition}

\begin{example}\label{exa:index}
    1. Define the \emph{order index} $o$ of a crossing $c$ to be $o(c)=-1$ if $c$ is an early undercrossing, and $o(c)=+1$ if $c$ is an early overcrossing (Fig.~\ref{fig:early_underovercrossing}). One can check that $o$ satisfies conditions (I0) and (I2).

    2. For a crossing $c$ of a knotoid diagram, the oriented smoothing at $c$ splits $D$ into the \emph{left half} $D^l_c$ and the \emph{right half} $D^r_c$  (Fig.~\ref{pic:knotoid_halves}). We define also the \emph{signed halves} $D^\pm_c$ by the formula $D^{sgn(c)}_c=D^l_c$ and $D^{-sgn(c)}_c=D^r_c$.

\begin{figure}[h]
\centering\includegraphics[width=0.5\textwidth]{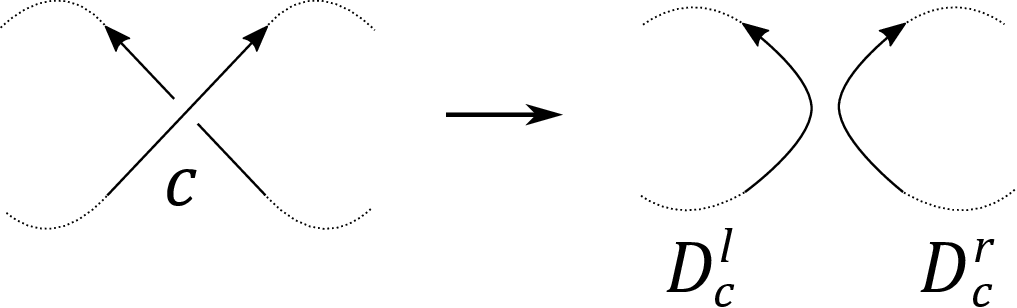}
\caption{The left and the right halves of the diagram}\label{pic:knotoid_halves}
\end{figure}

For any knotoid diagram $D$ and any crossing $c\in\mathcal C(D)$, define its index as the intersection number $Ind_D(v)=D\cdot D^+_v\in\Z$. One can check that it is an $\Omega_1$-reduced index. The index $Ind_D(c)=w_D(c)=-o_D(c)\cdot i(c)$ where $w_D(c)$ is the weight of the affine index polynomial~\cite{GK17}, and $i(c)$ is the intersection index from~\cite{KIL18}. The maps $w_D(c)$ and $i(c)$ are also $\Omega_1$-reduced indices.

Note that the expression $\iota_D(c)=t^{Ind_D(c)}-1$ is an $\Omega_1$-reduced index with coefficients in $\Z[t,t^{-1}]$.

\end{example}

The following statement follows immediately from the definition of index (cf.~\cite[Section 9.2]{Nif}).

\begin{proposition}\label{prop:index_polynomial}
    Let $\iota$ be an $\Omega_1$-reduced index with coefficients in an abelian group $A$. Then 
\[
lk_\iota(D)=\sum_{c\in\mathcal C(D)}\sign(c)\cdot\iota_D(c)\in A
\]
is an invariant of oriented knotoids.
\end{proposition}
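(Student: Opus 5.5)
Since knotoids are equivalence classes of diagrams under Reidemeister moves and isotopies of the surface, it suffices to check that $lk_\iota(D)$ is unchanged by an isotopy and by each of the three Reidemeister moves. Isotopy invariance is immediate: an isotopy carries crossings bijectively to crossings, preserving their signs, and the index values are unchanged since an isotopy can be viewed as a trivial move under which every crossing survives (I0). For each Reidemeister move $D\to D'$ we split the sum into crossings that survive the move and crossings created or destroyed by it. By (I0) the index values of surviving crossings agree in $D$ and $D'$. Their signs also agree, because a move changes the diagram only in a small disk and the orientations and over/under data of the two strands at a surviving crossing are unchanged. Hence only the non-surviving crossings can contribute to the difference $lk_\iota(D')-lk_\iota(D)$.

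We treat the three moves in turn. By symmetry we may assume the move is decreasing, passing from $D$ to $D'$ with fewer crossings.
\begin{itemize}
\item[$\Omega_1$.] $D$ has exactly one extra crossing $c$, to which a decreasing first move applies. By (I1), $\iota_D(c)=0$, so its term $\sign(c)\iota_D(c)$ vanishes.
\item[$\Omega_2$.] $D$ has two extra crossings $c_1,c_2$, to which a decreasing second move applies. By (I2), $\iota_D(c_1)=\iota_D(c_2)$. The two crossings are formed by the same pair of strands, with one strand passing over the other at both. Examining the orientations in the two cases (strands parallel or antiparallel) shows $\sign(c_1)=-\sign(c_2)$, so their contributions cancel.
\item[$\Omega_3$.] No crossings appear or disappear. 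Under the correspondence of Fig.~\ref{fig:R3crossings}, each crossing of $D$ matches a crossing of $D'$ formed by the same pair of strands with the same over/under relation. The local orientations are also the same, so the signs agree, and (I0) applies to all three crossings.
\end{itemize}

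The only point needing real care is the sign bookkeeping: checking that $\sign(c_1)=-\sign(c_2)$ for every orientation variant of $\Omega_2$, and that signs are preserved under the $\Omega_3$ correspondence. Both are local pictures. The sign depends only on the orientations of the two strands and which strand is over; in $\Omega_3$ neither changes, and in $\Omega_2$ the bigon reverses the relative orientation of the two strands between its two crossings while keeping the same strand on top.

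Finally, the restriction that moves do not involve the endpoints (Fig.~\ref{fig:forbidden_moves}) does not affect the argument, since the argument is entirely local to each allowed move. This completes the proof.
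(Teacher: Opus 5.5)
Your argument is correct and is the same routine verification the paper intends when it says the proposition ``follows immediately from the definition of index'' (citing \cite[Section 9.2]{Nif}): surviving crossings keep both sign and index by (I0), the $\Omega_1$ crossing contributes zero by (I1), and the two $\Omega_2$ crossings have equal index by (I2) and opposite signs, so they cancel. Your explicit sign bookkeeping for $\Omega_2$ and $\Omega_3$ just fills in what the paper leaves implicit.
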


The invariant $lk_\iota$ is called the \emph{index polynomial} (or the \emph{linking invariant}) of the index $\iota$.

\begin{example}\label{exa:linking_invariant}
  1.  The index polynomial of the index $t^{Ind_D(c)}-1$ from Example~\ref{exa:index} is the affine index polynomial~\cite{GK17}
\[
P_D(t)=\sum_{c\in\mathcal C(D)}\sign(c)(t^{Ind_D(c)}-1)=\sum_{c\in\mathcal C(D)}\sign(c)(t^{w_D(c)}-1).
\]
\end{example}

In order to define the homotopical index for knotoids in a surface, we need to recall the pure framed braid group and its action on the fundamental group.

\subsection{Pure framed braid group}

For a connected oriented surface $S$ and a subset $X\subset S$, let $Diff^+(S,X)$ denote the set of diffeomorphisms $\psi$ of $S$ such that $\psi$ preserves the orientation and $\psi|_{\partial S\cup X}=\id_{\partial S\cup X}$. Denote $Diff^+(S)=Diff^+(S,\emptyset)$, and let $Diff^+_0(S,X)$ be the connected component of $\id_S$ in $Diff^+(S,X)$.

Let $\mathbb D^2=\{z\in\mathbb C\mid |z|\le 1\}$ be the standard disk. Fix an embedding $U$ of two copies of $\mathbb D^2$ into $S$. Let disjoint disks $U_0, U_1\subset S$ be the image of $U$, and $z_i\in \partial U_i$, $i=0,1$, the images of $1\in\mathbb D^2$.

The homotopy group $FP_2(S)=\pi_1(Emb(\mathbb D^2\sqcup \mathbb D^2,S),U)$ is called the \emph{pure framed braid group} on two strands in the surface $S$~\cite{BG12}. The pure framed braid group is an extension of the pure braid group $P_2(S)$ on two strands in $S$:
\[
1\to \Z^2\to FP_2(S)\to P_2(S)\to 1.
\]
The kernel of the projection from $FP_2(S)$ onto $P_2(S)$ is generated by full turns of the disks $U_i$, $i=0,1$. 

The fibration
\[
Diff^+(S,U_0\cup U_1) \to Diff^+(S)\to Emb(\mathbb D^2\sqcup \mathbb D^2,S)
\]
induces the homotopical exact sequence 

\[
\pi_1(Emb(\mathbb D^2\sqcup \mathbb D^2,S),U)\stackrel{\partial}{\rightarrow} \pi_0(Diff^+(S,U_1\cup U_1))\to\pi_0(Diff^+(S)).
\]
The exactness means that for any $f\in Diff^+(S,U_1\cup U_1)\cap Diff^+_0(S)$, its mapping class is the image $\partial\beta$ of a pure framed braid $\beta$. 

Denote $\bar S = \overline{S\setminus (U_0\cup U_1)}$. Then $\pi_0(Diff^+(S,U_1\cup U_1))\simeq\pi_0(Diff^+(\bar S))$. The map $\partial$ from the pure framed braid group $FP_2(S)$ to the mapping class group $\pi_0(Diff^+(\bar S))$ induces an action of $FP_2(S)$ on the homotopy groups $\pi_1(\bar S,z_0)$ and $\pi_1(\bar S,z_0,z_1)$.

We denote $\bar\pi(S,z_0)=\pi_1(\bar S,z_0)/FP_2(S)$ and $\bar\pi(S,z_0,z_1)=\pi_1(\bar S,z_0,z_1)/FP_2(S)$.

\subsection{Based knotoid diagrams}

Let $O_0$ and $O_1$ be the centers of the disks $U_0$ and $U_1$ defined in the previous section. Fix radii $O_iz_i$ in $U_i$, $i=0,1$.

\begin{definition}\label{def:based_diagram}
Given a knotoid diagram $D$ in $S$, we say that $D$ is a \emph{based diagram} if $O_0$ is the tail of $D$, $O_1$ is the head of $D$, and $D\cap U_i=O_iz_i$, $i=1,2$, Fig.~\ref{fig:based_diagram}.

\begin{figure}[h]
    \centering
    \includegraphics[width=0.3\linewidth]{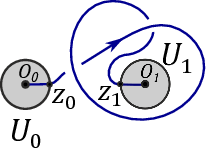}
    \caption{A based knotoid diagram}
    \label{fig:based_diagram}
\end{figure}

Let $D$ be a based knotoid diagram. An isotopy/Reidemester move on $D$ is called \emph{based} if the transformation does not affect the part of $D$ in $U_0$ and $U_1$.

Two based knotoid diagrams are \emph{based equivalent} if they are connected by a sequence of based Reidemeister moves and based isotopies. 
\end{definition}

\begin{remark}\label{rem:based_diagram}
    The map $D\mapsto \bar D=D\cap \bar S$ establishes a bijection between the set of based knotoid diagrams and the set of long knot diagrams with ends $z_0,z_1$ in the surface $\bar S$. By definition, two based diagrams $D_1$ and $D_2$ are based equivalent if and only if the long knot diagrams $\bar D_1$ and $\bar D_2$ define the same long knot.
\end{remark}

\begin{proposition}\label{prop:based_diagram}
1. For any knotoid diagram $D$, there is an isotopy $f\in Diff^+_0(S)$ such that $D'=f(D)$ is a based diagram.

2. For any based diagrams $D'$ and $D''$ obtained from $D$ by isotopies, there is a map $f\in Diff^+(S,U_1\cup U_2)\cap Diff^+_0(S)$ such that $f(D')=D''$.

3. For equivalent knotoid diagrams $D_1$ and $D_2$, there exist based knotoid diagrams $D_1'$ and $D_2'$ such that $D_i'$ is isotopic to $D_i$, and $D'_1$ and $D'_2$ are based equivalent.
\end{proposition}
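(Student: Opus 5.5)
Part 1 is proved by a direct isotopy construction. The endpoints of $D$ form an ordered pair of distinct points in the connected surface $S$ (connected means the configuration space of ordered pairs is path-connected when $\dim S=2$). Hence there is an ambient isotopy in $Diff^+_0(S)$ taking the tail to $O_0$ and the head to $O_1$.

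Next, choose small disks around the endpoints. Near an endpoint, $D$ is an embedded arc leaving the point in some direction. After a further small isotopy supported near $O_i$ (a rotation of a small disk, damped to the identity outside it), the initial segment of $D$ is a radial segment in the direction of $z_i$. A radial expansion supported near $O_i$ then enlarges that small disk to $U_i$. Since no crossings lie near the endpoints, this makes $D\cap U_i=O_iz_i$. Composing these isotopies gives $f\in Diff^+_0(S)$ with $f(D)$ based.

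For part 2, write $D'=f_1(D)$ and $D''=f_2(D)$ with $f_1,f_2\in Diff^+_0(S)$, and set $g=f_2f_1^{-1}\in Diff^+_0(S)$, so that $g(D')=D''$. The map $g$ sends $O_i$ to $O_i$ and maps the segment $O_iz_i=D'\cap U_i$ onto $D''\cap U_i=O_iz_i$, but it need not fix $U_i$ pointwise. The plan is to correct $g$ by a diffeomorphism $h$ supported in a neighbourhood of $U_0\cup U_1$ that preserves $D''$. We need $h\circ g|_{U_i}=\id$.

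Because $g$ fixes $O_i$ and preserves the radius direction there, $dg_{O_i}$ is isotopic to the identity through linear maps preserving that ray. Hence $g|_{U_i}$ can be isotoped, within embeddings fixing the radius setwise, to the inclusion; one can first linearize and then rescale. This uses the isotopy extension theorem in a collar $V_i\supset U_i$ chosen to meet $D''$ only in a radial arc. Composing gives $f=h\circ g$, which equals the identity on $U_0\cup U_1$, still lies in $Diff^+_0(S)$ (since $h$ is isotopic to the identity), and satisfies $f(D')=D''$.

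Part 3 is the main obstacle. $D_1$ and $D_2$ are connected by a sequence of Reidemeister moves and isotopies, and we argue by induction on its length, so it suffices to treat a single move. Apply part 1 to $D_1$ to get $D_1'=f(D_1)$. Isotopies are absorbed by reparametrising $f$.

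For a Reidemeister move, the move takes place in a disk $B$ avoiding the endpoints. Transport it by $f$ and, if $f(B)$ meets $U_0\cup U_1$, shrink it by an isotopy of $S$ fixing $f(D_1)$ setwise away from $B$ (pushing along the arc). This is possible precisely because moves are forbidden from involving the endpoints. Applying the transported move then gives a based diagram $D_2'$ isotopic to $D_2$ that is based equivalent to $D_1'$.

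For the induction, the next step starts from a possibly different based representative of $D_2$. By part 2 the two based representatives differ by $f\in Diff^+(S,U_0\cup U_1)\cap Diff^+_0(S)$. The argument therefore only produces a chain of based equivalences up to such maps, and the statement should be read accordingly. One ensures consistency by always carrying along the single based representative obtained from the previous step rather than re-choosing one.
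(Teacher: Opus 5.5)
Your proposal is correct and follows essentially the same route as the paper: connectedness of $S$ for part 1; for part 2, correcting $f_2\circ f_1^{-1}$ by a diffeomorphism isotopic to the identity that preserves $D''$ and undoes $f_2\circ f_1^{-1}$ on $U_0\cup U_1$; and for part 3, transporting the Reidemeister moves by the basing isotopy and keeping them off the disks. Your move-by-move induction that carries one based representative is just the unpacked form of the paper's single transport, so the hedge about reading the statement ``up to such maps'' is unnecessary. One small correction in part 2: your $g$ maps $O_iz_i$ only onto some initial arc of $D''$, not necessarily onto $O_iz_i$, so the isotopy of $g|_{U_i}$ should run through embeddings whose image meets $D''$ exactly in an initial arc, not through embeddings fixing the radius.
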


\begin{proof}
1. The first statement follows form connectivity of $S$.

2. By the first statement $D'=f_1(D)$, $D''=f_2(D)$ for some $f_1,f_2\in Diff^+_0(S)$. Let $g\in Diff^+_0(S)$ be a map such that it extends the isotopy from $\id_{U_0\cup U_1}$ to $(f_2\circ f_1^{-1})|_{U_0\cup U_1}$, and $g|_{D''}=\id_{D''}$. Then $f=g^{-1}\circ f_2\circ f_1^{-1}$ is the required map.

3. Let $D'_1=f_1(D_1)$, $f_1\in Diff^+_0(S)$, be a based diagram isotopic to $D_1$. The equivalence between $D_1$ and $D_2$ can be presented as a composition of an isotopy $g\in Diff^+_0(S)$ and a number of Reidemeisted moves. Consider the based diagram $D'_2=f_1\circ g^{-1}(D_2)$ of $D_2$. Then $D'_1$ and $D'_2$ are connected by a sequence of Reidemeister moves. Since Reidemeister moves do not affect the head and the tail,  we can suppose that $D'_1$ and $D'_2$ are based equivalent.
\end{proof}

\subsection{Homotopy index}

\begin{definition}\label{def:homotopy_index}
Let $c\in\mathcal C(D)$ be a crossing in a based diagram $D$. Let $D^{cl}_c$ be the closed half of the diagram at the crossing $c$, and $\gamma$ the arc of the diagram from $z_0$ to $c$. Consider the based half $\hat D_c=\gamma D^{cl}_c\gamma^{-1}$ of the crossing (Fig.~\ref{fig:based_half}). Its homotopy class
\[
h_D(c)=[\hat D_c]\in\bar\pi(S,z_0)
\]
is called the \emph{homotopy index} of the crossing $c$.
\begin{figure}[h]
    \centering
    \includegraphics[width=0.7\linewidth]{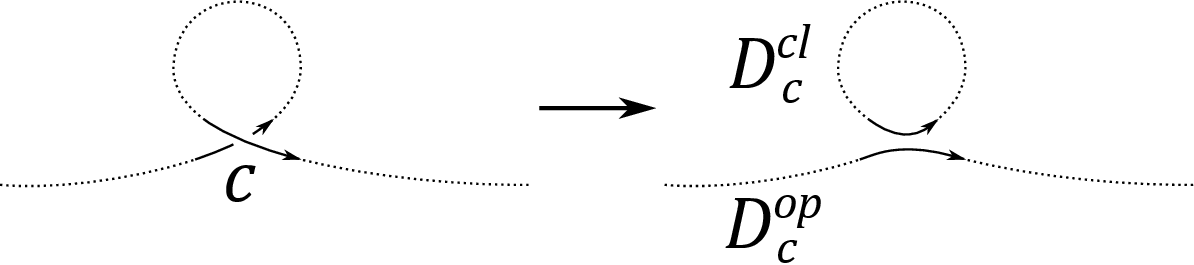}
    \caption{Closed and open halves of a diagram}
    \label{fig:long_knot_halves}
\end{figure}

For a non based knotoid diagram $D$ with an isotopic based diagram $D'=f(D)$, $f\in Diff^+_0(S)$, the homotopy index of a crossing $c\in\mathcal C(D)$ is defined by the formula
\[
h_D(c)=h_{D'}(f(c))
\]
where $f(c)\in\mathcal C(D')$ is the corresponding crossing in $D'$.
\end{definition}

\begin{proposition}\label{prop:homotopy_index}
    The homotopy index $h$ is an index with coefficients in $\bar\pi(S,z_0)$.
\end{proposition}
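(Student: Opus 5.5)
The plan has three parts: show that $h_D(c)$ is well defined, then check conditions (I0) and (I2).

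\emph{Well-definedness.} For a based diagram $D$, the loop $\hat D_c=\gamma D^{cl}_c\gamma^{-1}$ lies in $D\cap\bar S$ apart from the radius $O_0z_0$, which we cut off. So it defines an element of $\pi_1(\bar S,z_0)$, and hence a class in $\bar\pi(S,z_0)$. Now suppose $D'=f_1(D)$ and $D''=f_2(D)$ are two based diagrams isotopic to $D$. By Proposition~\ref{prop:based_diagram}(2) there is $f\in Diff^+(S,U_0\cup U_1)\cap Diff^+_0(S)$ with $f(D')=D''$, and $f$ carries crossings to crossings compatibly with the identification through $D$. Since $f$ fixes $U_0\cup U_1$ and the arc structure, it maps $\hat D'_{c'}$ onto $\hat D''_{c''}$. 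By the exact sequence above, the mapping class of $f|_{\bar S}$ equals $\partial\beta$ for some $\beta\in FP_2(S)$. Therefore $[\hat D''_{c''}]=\beta\cdot[\hat D'_{c'}]$, and the two classes coincide in the orbit set $\bar\pi(S,z_0)$. One small point is that $f_2\circ f_1^{-1}$ only needs to be isotopic to the identity in $S$, which is exactly what Proposition~\ref{prop:based_diagram}(2) supplies. The same argument shows that $h$ is invariant under arbitrary isotopies of $S$.

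\emph{Condition (I0).} By Proposition~\ref{prop:based_diagram}(3) and the previous step, it suffices to consider a based Reidemeister move $D_1\to D_2$ supported in a small disk $B\subset\bar S$ that avoids the endpoints $z_0,z_1$. Take a crossing $c$ that survives the move; for the third move we use the correspondence of Fig.~\ref{fig:R3crossings}. Compare $\hat D_{1,c}$ with $\hat D_{2,c}$. Outside $B$ the two loops coincide. Inside $B$, each of them consists of some subarcs of the diagram with the same endpoints on $\partial B$, and such subarcs are homotopic rel endpoints because $B$ is a disk. For the first and second moves this includes the case where $c$ lies outside $B$ but the path passes through $B$. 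For the third move, when $c$ itself lies in $B$, we check that the based half of $c$ differs from the based half of its partner only by a detour inside $B$. Hence the loops are homotopic in $\bar S$ and $h_{D_1}(c)=h_{D_2}(c)$.

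\emph{Condition (I2).} Let $c_1,c_2$ form a bigon to which a decreasing second move applies, and order them so that $c_1$ comes first along $D$. The two strands of the bigon are traversed in the same direction or in opposite directions; the smoothing picture depends on which case occurs. In both cases, the closed halves $D^{cl}_{c_1}$ and $D^{cl}_{c_2}$, conjugated by their initial arcs $\gamma_1$ and $\gamma_2$, differ only by short arcs inside the bigon disk. Concretely, $\hat D_{c_2}$ is obtained from $\hat D_{c_1}$ by sliding the basepoint of the closed loop along a bigon edge. This changes neither the free homotopy class in a suitable sense nor, after conjugating by the tail arc, the based class, so $[\hat D_{c_1}]=[\hat D_{c_2}]$.

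I expect this case analysis to be the main obstacle: for each orientation of the bigon one has to write out the loops explicitly and check that no extra conjugating element appears. If one does, it should be absorbed by the fact that the base arc $\gamma$ also passes through the bigon.
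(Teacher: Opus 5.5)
Your proposal is correct and follows the same route as the paper. Well-definedness comes from Proposition~\ref{prop:based_diagram}(2) together with the exact sequence, which gives a pure framed braid $\beta$ with $h_2=\beta(h_1)$; then (I0) and (I2) follow by checking that the relevant based halves are homotopic in $\bar S$, and your disk-$B$ argument for (I0) simply spells out what the paper asserts in one line.

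The (I2) check you flag as the main obstacle is in fact immediate. With $c_1$ first along $D$, one has $\gamma_2=\gamma_1\cdot e$ for the bigon edge $e$. For parallel strands the two based halves differ only by inserting the bigon boundary loop, and for antiparallel strands by replacing one bigon edge with the other, so no conjugating element arises.
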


\begin{proof}
Let us first check that the homotopy index is well defined. Let $D$ be a knotoid diagram and $c\in\mathcal C(D)$. Assume we have two based diagrams $D_i=f_i(D)$, $f_i\in Diff^+_0(S)$, $i=1,2$, isotopic to $D$. Denote $c_i=f_i(c)\in\mathcal C(D_i)$ and $h_i=h_{D_i}(c_i)$, $i=1,2$. We can assume that $g=f_2\circ f_1^{-1}\in Diff^+(S,U_1\cup U_2)$. Then by definition $h_2=g_*(h_1)$. Since $g\in Diff^+_0(S)$, its mapping class is the image of a framed braid $\beta$. Hence, $h_2=\beta(h_1)$, and $h_1$ and $h_2$ present the same element in $\bar\pi(S,z_0)$.

Let $D_1$ and $D_2$ be two diagrams connected by a Reidemeister move and $c\in\mathcal C(D_1)$ a crossing that survives the move. We can suppose that $D_1$ and $D_2$ are based diagrams. Then the based halves of $c$ in $D_1$ and $D_2$ are homotopic. Hence, $h_{D_1}(c)=h_{D_2}(c)$ that implies (I0).

Let $c_1,c_2\in\mathcal C(D)$ participate in a second Reidemeister move. Then their based halves are homotopic (Fig.~\ref{fig:homotopy_type_r2}). Hence, $h_D(c_1)=h_D(c_2)$. 

\begin{figure}[h]
    \centering
    \includegraphics[width=0.5\linewidth]{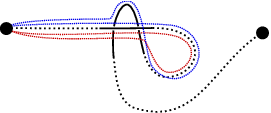}
    \caption{Based halves of crossings participating in $\Omega_2$}
    \label{fig:homotopy_type_r2}
\end{figure}

Thus, $h$ is an index.     
\end{proof}

\subsection{Homotopy type and homotopy index polynomials}

\begin{definition}\label{def:homotopy_type}
Let $D$ be a based diagram of a knotoid $K$ in $S$. The intersection $\bar D=D\cap\bar S$ is a path in $\bar S$ which starts in $z_0$ and ends in $z_1$. The \emph{homotopy type} of the knotoid $K$ is the homotopy class
\[
h(K)=[\bar D]\in\bar\pi(S,z_0,z_1).
\]
\end{definition}

\begin{definition}
    Let $D$ be a diagram of a knotoid $K$ in $S$. The \emph{early undercrossing homotopy index polynomial} of $K$ is 
\[
H_u(K)=\sum_{c\in\mathcal C(D)\colon o(c)=-1}\sign(c)(h_D(c)-1)\in\Z[\bar\pi(S,z_0)],
\]
and the \emph{early overcrossing homotopy index polynomial} of $K$ is 
\[
H_o(K)=\sum_{c\in\mathcal C(D)\colon o(c)=1}\sign(c)(h_D(c)-1)\in\Z[\bar\pi(S,z_0)].
\]
Here $1$ is the trivial homotopy class.
\end{definition}

\begin{theorem}\label{thm:invariance}
1. The homotopy type $h(K)$ is a knotoid invariant.

2. The homotopy index polynomials $H_o(K)$ and $H_u(K)$ are knotoid invariants.
\end{theorem}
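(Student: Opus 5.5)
For part 1, I will follow the pattern used for well-definedness in Proposition~\ref{prop:homotopy_index}. First I check that $h(K)$ does not depend on the choice of based diagram within an isotopy class. Let $D'$ and $D''$ be based diagrams isotopic to the same diagram $D$. By Proposition~\ref{prop:based_diagram}(2) there is $f\in Diff^+(S,U_0\cup U_1)\cap Diff^+_0(S)$ with $f(D')=D''$. Then $[\bar D'']=f_*[\bar D']$ in $\pi_1(\bar S,z_0,z_1)$. By the exact sequence of the fibration, the mapping class of $f$ is $\partial\beta$ for some $\beta\in FP_2(S)$. Hence $[\bar D'']=\beta([\bar D'])$, and both diagrams give the same class in $\bar\pi(S,z_0,z_1)$.

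Next let $D_1,D_2$ be equivalent diagrams. By Proposition~\ref{prop:based_diagram}(3) we may replace them by isotopic based diagrams $D_1',D_2'$ that are based equivalent. By Remark~\ref{rem:based_diagram}, $\bar D_1'$ and $\bar D_2'$ are diagrams of the same long knot in $\bar S$. Each based Reidemeister move and based isotopy is supported away from $U_0\cup U_1$ and keeps the endpoints $z_0,z_1$ fixed, so it changes the path $\bar D$ only by a homotopy rel endpoints in $\bar S$. Therefore $[\bar D_1']=[\bar D_2']$ already in $\pi_1(\bar S,z_0,z_1)$, and hence in the quotient.

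For part 2, I will show that $\iota=\frac{1-o}{2}(h-1)$ and $\iota'=\frac{1+o}{2}(h-1)$ are $\Omega_1$-reduced indices with values in $\Z[\bar\pi(S,z_0)]$. Here $\frac{1\mp o}{2}\in\{0,1\}$ is the indicator of early under/overcrossings. Then $H_u=lk_\iota$ and $H_o=lk_{\iota'}$, and Proposition~\ref{prop:index_polynomial} gives invariance. Since $o$ and $h$ are indices (Example~\ref{exa:index} and Proposition~\ref{prop:homotopy_index}), the product satisfies (I0) and (I2): both factors are preserved for surviving crossings and agree on the two crossings of a decreasing $\Omega_2$ move.

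It remains to check (I1). If a decreasing first Reidemeister move can be applied at $c$, then the closed half $D^{cl}_c$ is a small loop bounding a disk in $\bar S$ that avoids $U_0$ and $U_1$. The based half $\gamma D^{cl}_c\gamma^{-1}$ is therefore null-homotopic, so $h_D(c)=1$ and $h_D(c)-1=0$. I expect this step to be the main point needing care. The definition of $D^{cl}_c$ is given only by a figure, so I must confirm that at a kink the closed half is the small loop and not the complementary open part. The argument also assumes the diagram has been made based, which is harmless by the well-definedness established in Proposition~\ref{prop:homotopy_index}. Once this is confirmed, invariance under $\Omega_1$ follows from Proposition~\ref{prop:index_polynomial}.
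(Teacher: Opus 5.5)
Your proposal is correct and follows the paper's proof essentially step for step. For part 1 you get independence of the based representative from Proposition~\ref{prop:based_diagram}(2) and the fibration sequence, then use based equivalence of the long knots $\bar D_1',\bar D_2'$; for part 2 you use the indices $\frac{1\pm o}{2}(h-1)$, with (I1) from contractibility of the kink loop and invariance from Proposition~\ref{prop:index_polynomial}. The point you flag is not a real issue: the closed half at any crossing is the segment of the diagram between its two passages through $c$, which at a kink is exactly the small loop, and it avoids $U_0\cup U_1$ because no crossing lies on the radii $O_iz_i$.
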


\begin{proof}
1. Let $D, D'$ be isotopic based diagrams of $K$. By Proposition~\ref{prop:based_diagram} $D'=f(D)$ for some $f\in Diff^+(S,U_0\cup U_1)\cap Diff^+_0(S)$. Hence, $[\bar D']=\beta[\bar D]$ for some pure framed braid $\beta$. Thus, $[\bar D']$ and $[\bar D]$ define the same element $h(K)$ in $\bar\pi(S,z_0,z_1)$, i.e. $h(K)$ does not depend on the choice of a based diagram in the isotopy class.

Let $D_1$ and $D_2$ are based diagram of $K$. By Proposition~\ref{prop:based_diagram}, there exist based diagrams $D'_1$ and $D'_2$ such that $D'_i$ is isotopic to $D_i$, and $D'_1$ and $D'_2$ are based equivalent. Then $\bar D'_1$ and $\bar D'_2$ are equivalent long knots in $\bar S$. Hence, $[D'_1]=[D'_2]\in\pi_1(\bar S,z_0,z_1)$. Thus, $[D_1]=[D'_1]=[D'_2]=[D_2]$ in $\bar\pi(S,z_0,z_1)$, and $h(K)$ does not depend on the choice of based diagram of $K$.

2. By Proposition~\ref{prop:homotopy_index} and Example~\ref{exa:index}, the expression $\iota_\pm(c)=\frac{1\pm o(c)}{2}(h_D(c)-1)$ is an index. For any crossing $c$ participating in a first Reidemeister move, its closed half and the based half is contractible, hence, $h(c)=1$ and $\iota_\pm(c)=0$. Thus, $\iota_\pm$ is an $\Omega_1$-reduced index with coefficients in $\Z[\bar\pi(S,z_0)]$. By Proposition~\ref{prop:index_polynomial}, $H_o=lk_{\iota_+}$ and $H_u=lk_{\iota_-}$ are knotoid invariants.
\end{proof}



\subsection{Symmetries of knotoids}

\begin{definition}\label{def:knotoid_symmetry}
Given a knotoid $K$, one can assign to it some other knotoids~\cite{GDS19} (Fig.~\ref{fig:knotoid_symmetries}):
\begin{itemize}
    \item the \emph{reversed knotoid} $-K$ is the result of the orientation reversion applied to $K$;
    \item the \emph{mirror knotoid} $\bar K$ is obtained from $K$ by switching all the crossings in a diagram of $K$;
    \item if $K$ is planar or spherical then the \emph{symmetrical knotoid} $K^*$ is defined by applying an orientation changing symmetry of the plane (sphere) to a diagram of $K$;
    \item for a planar (spherical) $K$, its \emph{rotation} $\rot(K)$ is the composition $\rot(K)=\bar K^*$.
\end{itemize}
\end{definition}

\begin{figure}[h]
    \centering
    \includegraphics[width=0.6\linewidth]{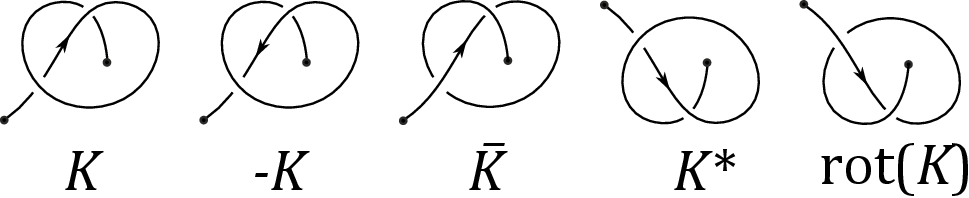}
    \caption{Symmetry operations on a knotoid}
    \label{fig:knotoid_symmetries}
\end{figure}

\begin{proposition}\label{prop:mirror_knotoid}
    For any knotoid $K$, $H_q(\bar K)=-H_{\bar q}(K)$, where $q\in\{o,u\}$, $\bar o=u$, $\bar u=o$.
\end{proposition}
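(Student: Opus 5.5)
Take a diagram $D$ of $K$ (by Proposition~\ref{prop:based_diagram} we may assume it is based), and let $\bar D$ be the diagram obtained by switching all crossings. Then $\bar D$ is a diagram of $\bar K$ with the same underlying immersed curve. This gives a natural bijection $\mathcal C(D)\to\mathcal C(\bar D)$, $c\mapsto\bar c$. Since $H_q$ is a knotoid invariant by Theorem~\ref{thm:invariance}, it suffices to compare the sums for $D$ and $\bar D$ term by term under this bijection. We need three facts about the quantities attached to a crossing.

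\textbf{Sign.} Switching a crossing exchanges the over- and under-strands but keeps both orientations, so $\sign(\bar c)=-\sign(c)$.

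\textbf{Order index.} A crossing is an early undercrossing if the first passage through it along the knotoid (starting at the tail) is the under-passage. Switching makes that first passage the over-passage. Hence $o(\bar c)=-o(c)$, so $\mathcal C_u(\bar D)$ corresponds to $\mathcal C_o(D)$ and vice versa.

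\textbf{Homotopy index.} The based half $\hat D_c=\gamma D^{cl}_c\gamma^{-1}$ is built only from the underlying curve. The arc $\gamma$ from $z_0$ to the first passage through $c$, and the closed loop $D^{cl}_c$ between the two passages, do not depend on crossing information. The path itself is therefore literally the same for $c$ and $\bar c$, so $h_{\bar D}(\bar c)=h_D(c)$ in $\bar\pi(S,z_0)$.

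The one point to check carefully is that the closed half in Definition~\ref{def:homotopy_index} is defined by traversal order, as the loop between the first and second passages. If it were instead defined through the over/under structure, or as a signed half $D^\pm_c$ (which depends on $\sign(c)$), the switch could change which half is used. I would argue from Fig.~\ref{fig:based_half} that the based half is the traversal-order loop, which is purely combinatorial on the immersed curve. This is the main potential obstacle, but it should be immediate.

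Combining the three facts:
\[
H_u(\bar K)=\sum_{\bar c\in\mathcal C_u(\bar D)}\sign(\bar c)\bigl(h_{\bar D}(\bar c)-1\bigr)=\sum_{c\in\mathcal C_o(D)}\bigl(-\sign(c)\bigr)\bigl(h_D(c)-1\bigr)=-H_o(K).
\]
The argument for $H_o(\bar K)=-H_u(K)$ is identical with $u$ and $o$ exchanged.
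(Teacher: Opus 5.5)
Your proposal is correct and is the paper's proof: the same crossing bijection $c\mapsto\bar c$ with $\sign(\bar c)=-\sign(c)$, $o(\bar c)=-o(c)$ and $h_{\bar K}(\bar c)=h_K(c)$. The obstacle you flag does not arise, because the closed half $D^{cl}_c$ is the loop between the two passages through $c$ and so depends only on the underlying curve.
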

\begin{proof}
    There is a bijection between the crossing sets $\mathcal C(K)$ and $\mathcal C(\bar K)$. For a crossing $c\in\mathcal C(K)$, denote the corresponding crossing in $\bar K$ by $\bar c$. Then for any crossing $c$ we have $\sign(\bar c)=-\sign(c)$, $o(\bar c)=-o(c)$, $h_{\bar K}(\bar c)=h_K(c)$ from which the statement follows. 
\end{proof}

In order to describe the effect of knotoid reversion, we need to extend the homotopy index invariant (cf.~\cite{Next}).

\subsection{Extended homotopy index polynomial}

\begin{definition}\label{def:extended_homotopy_polynomial}
Let $\Pi(S,z_0,z_1)$ be the set of orbits of the set $\Z[\pi_1(\bar S,z_0)]\times\Z[\pi_1(\bar S,z_0)]\times\pi_1(\bar S,z_0,z_1)$ by the diagonal action of the pure framed braid group $FP_2(S)$.

For a based knotoid diagram $D$, define its \emph{extended homotopy index polynomial} as the triple
\begin{gather*}
\tilde H(D)=\left(\sum_{c\in\mathcal C(D)\colon o(c)=1}\sign(c)([\hat D_c]-1), \sum_{c\in\mathcal C(D)\colon o(c)=-1}\sign(c)([\hat D_c]-1), [\bar D]\right)
\end{gather*}
considered as an element of $\Pi(S,z_0,z_1)$.

For a non based knotoid diagram $D$, its extended homotopy index polynomial is defined as the extended polynomial $\tilde H(D')$ of a based diagram $D'$ isotopic to $D$.
\end{definition}

\begin{theorem}\label{thm:extended_polynoial_invariance}
    The extended homotopy index polynomial $\tilde H$ is a knotoid invariants.
\end{theorem}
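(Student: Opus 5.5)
We prove the invariance of $\tilde H$ the same way as Theorem~\ref{thm:invariance}, but we track the three components at the level of actual homotopy classes in $\pi_1(\bar S,z_0)$ and $\pi_1(\bar S,z_0,z_1)$ for based diagrams, and pass to $FP_2(S)$-orbits only at the end. The key point is that in Theorem~\ref{thm:invariance} each component was shown invariant modulo the braid action separately. Here we must show that one and the same braid $\beta$ acts on all three components at once, so that the triple is well defined in the diagonal quotient $\Pi(S,z_0,z_1)$.

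\textbf{Step 1 (independence of the based representative).} Let $D'$ and $D''$ be based diagrams isotopic to a given diagram $D$. By Proposition~\ref{prop:based_diagram}(2) there is $f\in Diff^+(S,U_0\cup U_1)\cap Diff^+_0(S)$ with $f(D')=D''$. Since $f$ fixes $U_0\cup U_1$, it maps $\bar D'$ to $\bar D''$, and for every crossing $c$ it maps the based half $\hat D'_c$ to $\hat D''_{f(c)}$: it sends the arc $\gamma$ from $z_0$ to $c$ and the closed half to the corresponding ones. The orientation of $S$ is preserved, so signs of crossings are preserved, and the order index $o$ is preserved as well. Hence
\[
f_*\bigl(\tilde H(D')\bigr)=\tilde H(D'')
\]
componentwise, where $f_*$ acts on $\Z[\pi_1(\bar S,z_0)]^2\times\pi_1(\bar S,z_0,z_1)$. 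By the exact sequence, the mapping class of $f$ equals $\partial\beta$ for some $\beta\in FP_2(S)$. Therefore $f_*$ is the diagonal action of $\beta$, and the two triples lie in the same orbit.

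\textbf{Step 2 (invariance under based moves).} Let $D_1$ and $D_2$ be based diagrams connected by a single based Reidemeister move or a based isotopy. A based isotopy fixing $U_0\cup U_1$ and homotopic to the identity rel these disks does not change homotopy classes in $\bar S$. For a Reidemeister move, the proof of Proposition~\ref{prop:homotopy_index} shows that the based half of each surviving crossing keeps its class in $\pi_1(\bar S,z_0)$ itself, not just its orbit. The move is supported in a disk in $\bar S$, so $[\bar D_1]=[\bar D_2]$ in $\pi_1(\bar S,z_0,z_1)$. We then check each move type:
\begin{itemize}
\item[$\Omega_1$:] the removed crossing has a contractible based half, so it contributes $0$ to both sums.
\item[$\Omega_2$:] the two crossings have equal homotopy index, equal order index (both are early under- or both early overcrossings, since they involve the same pair of strands in the same order), and opposite signs, so their contributions cancel.
\item[$\Omega_3$:] the crossings correspond as in Fig.~\ref{fig:R3crossings}, with signs, $o$ and classes preserved.
\end{itemize}
This is the same argument as Proposition~\ref{prop:index_polynomial}, applied to the non-quotiented index, and it gives $\tilde H(D_1)=\tilde H(D_2)$ exactly.

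\textbf{Step 3 (conclusion).} For equivalent diagrams, Proposition~\ref{prop:based_diagram}(3) gives based diagrams, isotopic to each, that are based equivalent. Combining Steps 1 and 2 shows that their $\tilde H$ values coincide in $\Pi(S,z_0,z_1)$.

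The main obstacle is the subtle point in Step 1. We must verify that the classes defining the index are literally transported by $f_*$, and in particular that $\gamma$ starts at $z_0$, which $f$ fixes. We must also be sure that the braid obtained from the exact sequence acts through the same homomorphism on both $\pi_1(\bar S,z_0)$ and $\pi_1(\bar S,z_0,z_1)$. This holds because both actions are induced by the single mapping class $[f|_{\bar S}]$.
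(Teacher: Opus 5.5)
Your proposal is correct and is essentially the argument the paper means when it says the proof is analogous to that of Theorem~\ref{thm:invariance}. You make explicit the two points hidden in that remark: first, a single $f\in Diff^+(S,U_0\cup U_1)\cap Diff^+_0(S)$, and hence a single framed braid, transports all three components at once; second, based moves preserve the actual classes in $\pi_1(\bar S,z_0)$ and $\pi_1(\bar S,z_0,z_1)$, not only their orbits. (One minor caveat: your non-quotiented class is defined only for based diagrams and is invariant only under based moves, so it is not an index in the paper's sense; your direct $\Omega_1$/$\Omega_2$/$\Omega_3$ check already supplies what is needed.)
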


The proof is analogous to the proof of Theorem~\ref{thm:invariance}.

Note that the natural projections from $\Pi(S,z_0,z_1)$ to $\Z[\bar\pi(S,z_0)]$ and $\bar\pi(S,z_0,z_1)$ map $\tilde H(K)$ to $H_o(K)$, $H_u(K)$ and $h(K)$. Example~\ref{exa:extended_polynomials} below shows that the extended homotopy index polynomial  $\tilde H$ is stronger than the polynomials $H_o$, $H_u$.

Let $\tau\in Diff^+_0(S)$ be a map that swaps the disks $U_0$ and $U_1$. Consider the map $\phi$ of $\Z[\pi_1(\bar S,z_0)]\times\Z[\pi_1(\bar S,z_0)]\times\pi_1(\bar S,z_0,z_1)$ to itself defined by the formula
{\small
\[
\phi\left(\sum_i a_i\alpha_i,\sum_j b_j\beta_j,\gamma\right)=\left(\sum_j b_j\cdot\tau_*(\gamma^{-1}\beta_j^{-1}\gamma), \sum_i a_i\cdot\tau_*(\gamma^{-1}\alpha_i^{-1}\gamma), \tau_*(\gamma^{-1})\right),
\]
}
where $a_i,b_j\in\Z$, $\alpha_i,\beta_j\in\pi_1(\bar S,z_0)$, $\gamma\in\pi_1(\bar S,z_0,z_1)$. This map induces a well defined map $\phi\colon\Pi(S,z_0,z_1)\to\Pi(S,z_0,z_1)$.

\begin{proposition}\label{prop:reversed_knotoid}
    For an oriented knotoid $K$, $\tilde H(-K)=\phi(\tilde H(K))$.
\end{proposition}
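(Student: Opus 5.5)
Fix a based diagram $D$ of $K$ and build a based diagram of $-K$ directly from it, then compare the three components of $\tilde H$ crossing by crossing. As a diagram, $-K$ is $D$ with its orientation reversed: the tail now sits at $O_1$ and the head at $O_0$. Apply the map $\tau\in Diff^+_0(S)$, which swaps $U_0$ and $U_1$. We may choose $\tau$ to carry each $U_i$ onto the other by the standard identification, so that $O_iz_i\mapsto O_{1-i}z_{1-i}$. Then $D^-=\tau(-D)$ is a based diagram isotopic to $-D$. By Definition~\ref{def:extended_homotopy_polynomial} and Theorem~\ref{thm:extended_polynoial_invariance}, $\tilde H(-K)=\tilde H(D^-)$. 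The crossings of $D^-$ correspond bijectively to those of $D$, via $c\mapsto\tau(c)$.

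The third component is immediate. The path $\overline{-D}$ is $\bar D^{-1}$, which runs from $z_1$ to $z_0$. Applying $\tau$ gives $[\bar D^-]=\tau_*(\gamma^{-1})$, where $\gamma=[\bar D]$.

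Next, check how signs and order indices behave under reversal.
\begin{itemize}
\item Reversing the orientation of both strands at a crossing does not change its sign.
\item It does swap which strand is passed first, so an early overcrossing becomes an early undercrossing and vice versa.
\item $\tau$ preserves orientation, so it changes neither the sign nor the over/under data.
\end{itemize}
Hence $\sign(\tau(c))=\sign(c)$ and $o(\tau(c))=-o(c)$. This accounts for the swap of the first two components in the formula for $\phi$.

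The main step, and the one needing most care, is the based half. Let $\gamma_c$ be the arc of $\bar D$ from $z_0$ to $c$ and $\delta_c$ the arc from $c$ to $z_1$. The path $\bar D$ runs $z_0\to c\to c\to z_1$. The closed half $D^{cl}_c$ is the loop between the two passes through $c$, and $\bar D=\gamma_c D^{cl}_c\delta_c$. For the reversed diagram, the closed half at $c$ is $(D^{cl}_c)^{-1}$, and the arc from the new start $z_1$ to $c$ is $\delta_c^{-1}$. So the based half of $c$ in $-D$, based at $z_1$, is
\[
\delta_c^{-1}(D^{cl}_c)^{-1}\delta_c .
\]
Substituting $\delta_c=(\gamma_cD^{cl}_c)^{-1}\gamma$ gives
\[
\gamma^{-1}\gamma_cD^{cl}_c(D^{cl}_c)^{-1}(D^{cl}_c)^{-1}\gamma_c^{-1}\gamma
=\gamma^{-1}\bigl(\gamma_c(D^{cl}_c)^{-1}\gamma_c^{-1}\bigr)\gamma
=\gamma^{-1}[\hat D_c]^{-1}\gamma ,
\]
as homotopy classes of loops based at $z_1$. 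Applying $\tau_*$ moves the basepoint back to $z_0$. Hence $[\hat D^-_{\tau(c)}]=\tau_*(\gamma^{-1}[\hat D_c]^{-1}\gamma)$, and the constant term $-1$ is preserved.

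Summing over crossings with the sign and order rules above yields exactly $\tilde H(D^-)=\phi(\tilde H(D))$ at the level of representatives. It remains to check that this is compatible with taking $FP_2(S)$-orbits. The paper asserts $\phi$ is well defined on $\Pi(S,z_0,z_1)$, and this also covers a different choice of $\tau$: two choices differ by an element of $Diff^+(S,U_0\cup U_1)\cap Diff^+_0(S)$, hence by a pure framed braid. So the identity holds in $\Pi(S,z_0,z_1)$.

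Two points need care. First, the ordering conventions for paths, i.e. confirming that the half traversed between the two passes is the closed half in both orientations. Second, ensuring that $\tau$ is chosen to map the radii $O_iz_i$ correctly, so that $D^-$ is genuinely based.
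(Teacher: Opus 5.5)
Your proposal is correct and follows essentially the same route as the paper. You take a based diagram $D$, pass to the based diagram $\tau(-D)$, and observe that signs are preserved, order indices flip, the third component becomes $\tau_*([\bar D]^{-1})$, and the based half becomes $\tau_*([\bar D]^{-1}[\hat D_c]^{-1}[\bar D])$. Your explicit derivation of that conjugation formula via $\bar D=\gamma_c D^{cl}_c\delta_c$ fills in a detail the paper leaves implicit, as does your remark that $\tau$ must carry $O_iz_i$ onto $O_{1-i}z_{1-i}$ so that $\tau(-D)$ is genuinely based.
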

\begin{proof}
Let $D$ be a based diagram of $K$, and $-D$ be the reversed diagram. For a crossing $c\in\mathcal C(D)$, denote the corresponding crossing in $\mathcal C(-D)$ by $-c$. Then $\sign(-c)=\sign(c)$, $o(-c)=-o(c)$, and $[\widehat{-D}_{-c}]=[\bar D]^{-1}[\hat D_c]^{-1}[\bar D]\in\pi_1(\bar S,z_1)$. Moreover, $[\overline{-D}]=[\bar D]^{-1}\in\pi_1(\bar S,z_1,z_0)$.

The diagram $-D$ is not based but it is isotopic to a based diagram $\tau(-D)$. Hence,
\begin{multline*}
\tilde H(-K)=\tilde H(\tau(-D))=
\left(\sum_{-c\in\mathcal C(-D)\colon o(-c)=1}\sign(-c)\cdot\tau_*([\widehat{-D}_{-c}]-1), \right.\\
\left. \sum_{-c\in\mathcal C(-D)\colon o(-c)=-1}\sign(-c)\cdot\tau_*([\widehat{-D}_{-c}]-1),\tau_*([\overline{-D}])\right)=\\
\left(\sum_{c\in\mathcal C(D)\colon o(c)=-1}\sign(c)\cdot(\tau_*([\bar D]^{-1}[\hat D_c]^{-1}[\bar D])-1), \right.\\
\left. \sum_{c\in\mathcal C(D)\colon o(c)=1}\sign(c)\cdot(\tau_*([\bar D]^{-1}[\hat D_c]^{-1}[\bar D])-1),\tau_*([\bar D]^{-1})\right)=\phi(\tilde H(K)).
\end{multline*}
\end{proof}

\subsection{Skew homotopy index polynomial}

For a knotoid $K$, we call the difference $\Delta H(K)=H_o(K)-H_u(K)$  the  \emph{skew homotopy index polynomial} of $K$.

\begin{proposition}\label{prop:clasp_move}
Clasp moves (Fig.~\ref{fig:clasp_move}) do not change $\Delta H$.

\begin{figure}[h]
    \centering
    \includegraphics[width=0.2\linewidth]{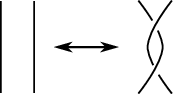}
    \caption{Clasp move}
    \label{fig:clasp_move}
\end{figure}
\end{proposition}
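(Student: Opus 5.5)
A clasp move replaces two strands that pass one over the other by a configuration with two new crossings $c_1,c_2$ between the same two strands, or conversely. The new crossings have opposite positions: one strand is over at one crossing and under at the other. I will work with a based diagram $D$, which is allowed by Proposition~\ref{prop:based_diagram}, and compare $\Delta H(D)$ with $\Delta H(D')$, where $D'$ is the result of the clasp move. The plan is to show that the two new crossings contribute equal amounts to $\Delta H$, so that they cancel, and that the indices of all other crossings do not change.

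\textbf{Other crossings.} The clasp move is a homotopy of the curve $\bar D$ in $\bar S$ supported in a small disk; it is a crossing change combined with $\Omega_2$-type moves. For every crossing $c$ outside the disk, the based half $\hat D_c$ changes only by such a local homotopy, so $h(c)$ is unchanged. The sign $\sign(c)$ is unchanged, and so is $o(c)$, since the order of passage along the strands is unchanged. Hence $\Delta H$ changes only by the contribution of $c_1$ and $c_2$.

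\textbf{The two new crossings.} Both crossings are formed by the same pair of strands meeting in a small disk. Their based halves $\hat D_{c_1}$ and $\hat D_{c_2}$ therefore differ only inside that disk, so $h(c_1)=h(c_2)$, just as in Fig.~\ref{fig:homotopy_type_r2}.

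\textbf{Signs and orders.} The case check to carry out is the following. In an $\Omega_2$ configuration the two crossings have opposite signs and the same over-strand, hence the same order index $o$. Relative to $\Omega_2$, the clasp switches exactly one of the two crossings. That switch reverses its sign and also reverses its order type, because the same strand is first in time but now passes under instead of over. So after the clasp $\sign(c_1)=-\sign(c_2)$ still holds, and $o(c_1)=-o(c_2)$.

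\textbf{Contribution to $\Delta H$.} Crossing $c$ contributes $o(c)\sign(c)(h(c)-1)$ to $\Delta H$. For $c_1$ and $c_2$ the products $o\cdot\sign$ are equal, so the total contribution is $2\,o(c_1)\sign(c_1)(h(c_1)-1)$, and the change of $\Delta H$ is this quantity. This does not vanish in general, so the cancellation must come from the clasp figure itself rather than from the argument above. There are two ways this can work. The clasp could create a pair with equal signs and opposite orders, so that the pair still cancels in $\Delta H=H_o-H_u$, because $o\sign$ flips. Alternatively, Fig.~\ref{fig:clasp_move} could be a crossing-change pair in which both new crossings have the same index.

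\textbf{Main obstacle.} The hardest step is pinning down the precise signs and orders from the clasp figure. I will compute directly: with two strands $a$ (earlier) and $b$ (later), clasping so that $a$ is over at $c_1$ and under at $c_2$. Then $o(c_1)=1$ and $o(c_2)=-1$. The orientations of the strands at $c_1$ and $c_2$ are those of a clasp, which gives opposite signs in an $\Omega_2$-bigon pattern and equal signs in a clasp. Equal signs make the contributions to $H_o$ and $H_u$ equal, namely $\sign(c)(h-1)$ each, so they cancel in the difference. This is where I expect the verification to concentrate.
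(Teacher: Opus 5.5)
Your final paragraph reaches the same argument as the paper. Crossings outside the clasp keep $h$, $o$ and $\sign$. The two clasp crossings satisfy $h(c_1)=h(c_2)$, $\sign(c_1)=\sign(c_2)$ and $o(c_1)=-o(c_2)$. So the early overcrossing among them adds $\sign(c_1)(h(c_1)-1)$ to $H_o$, the early undercrossing adds the same element to $H_u$, and $\Delta H$ is unchanged.

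However, the proposal as written contains a false step that you never retract. In the paragraph on signs and orders you correctly note three things: an $\Omega_2$ pair has opposite signs and equal order indices; the clasp is that pair with exactly one crossing switched; and a switch reverses both the sign and the order index of that crossing. You then conclude that $\sign(c_1)=-\sign(c_2)$ ``still holds''. It does not. Reversing the sign of one member of a pair with opposite signs makes the signs \emph{equal}.

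This slip is the only source of the spurious nonzero change $2\,o(c_1)\sign(c_1)(h(c_1)-1)$ and of the hedging between ``two ways this can work''. In the last paragraph the equal-sign fact is then simply asserted (``equal signs in a clasp''). The text therefore claims both opposite and equal signs, and the only derivation it gives leads to the wrong one.

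The repair uses your own comparison with $\Omega_2$. The two strands in the clasp disk are disjoint sub-arcs of the knotoid, so one of them, say $a$, is entirely earlier, and the same strand passes first at both crossings. Hence the $\Omega_2$ pair has $\sign(c_1)=-\sign(c_2)$ and $o(c_1)=o(c_2)$. Switching $c_2$ gives $\sign(c_1)=\sign(c_2)$ and $o(c_1)=-o(c_2)$, so $o(c_1)\sign(c_1)=-o(c_2)\sign(c_2)$. Since $h(c_1)=h(c_2)$, the contributions $o(c)\sign(c)(h(c)-1)$ of $c_1$ and $c_2$ to $\Delta H$ cancel. With that correction, and the ``Contribution'' paragraph and its hedging removed, your proof is the paper's.
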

\begin{proof}
    Let $D_1$ and $D_1$ be knotoid diagrams connected by a clasp move. Assume that $\mathcal C(D_2)=\mathcal C(D_1)\cup\{c_1,c_2\}$ where $c_1$ and $c_2$ are the clasp crossings. For any crossing $c\in\mathcal C(D_1)$, $h_{D_2}(c)=h_{D_1}(c)$ and $o_{D_1}(c)=o_{D_2}(c)$. On the other hand, $\sign(c_1)=\sign(c_2)$, $h_{D_2}(c_1)=h_{D_2}(c_2)$, and $o_{D_2}(c_1)=-o_{D_2}(c_2)$. Assume that, $o_{D_2}(c_1)=+1$. Then
\begin{gather*}
    H_o(D_2)=H_o(D_1)+\sign(c_1)(h(c_1)-1),\\
    H_u(D_2)=H_u(D_1)+\sign(c_2)(h(c_2)-1).
\end{gather*}
Hence,
\begin{multline*}
    \Delta H(D_2)=\Delta H(D_1)+\sign(c_1)(h(c_1)-1)-\sign(c_2)(h(c_2)-1)=\Delta H(D_1).
\end{multline*}

\end{proof}


\begin{corollary}\label{cor:deltaH}
    The polynomial $\Delta H(K)$ depends only on the homotopy type of $K$.
\end{corollary}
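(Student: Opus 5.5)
The idea is to show that any two knotoids with the same homotopy type are related by Reidemeister moves, based isotopies and clasp moves. By Theorem~\ref{thm:invariance}, $H_o$ and $H_u$ are knotoid invariants, so $\Delta H$ is unchanged by Reidemeister moves and isotopies. By Proposition~\ref{prop:clasp_move}, it is also unchanged by clasp moves. So once the connection is established, $\Delta H(K_1)=\Delta H(K_2)$, and the statement follows.

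\textbf{Step 1: reduce to based diagrams with equal classes in $\pi_1(\bar S,z_0,z_1)$.} Take based diagrams $D_1$ of $K_1$ and $D_2$ of $K_2$ (Proposition~\ref{prop:based_diagram}). Suppose $h(K_1)=h(K_2)$ in $\bar\pi(S,z_0,z_1)$. Then $[\bar D_2]=\beta[\bar D_1]$ for some pure framed braid $\beta\in FP_2(S)$. Choose $f\in Diff^+(S,U_0\cup U_1)\cap Diff^+_0(S)$ representing $\partial\beta$. The diagram $f(D_1)$ is isotopic to $D_1$, so it is again a based diagram of $K_1$, and $[\overline{f(D_1)}]=[\bar D_2]$. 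Replacing $D_1$ by $f(D_1)$, we may therefore assume $[\bar D_1]=[\bar D_2]\in\pi_1(\bar S,z_0,z_1)$.

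\textbf{Step 2: realize the homotopy by moves.} By Remark~\ref{rem:based_diagram}, $\bar D_1$ and $\bar D_2$ are long knot diagrams in $\bar S$ with ends $z_0,z_1$, and they are homotopic rel endpoints. A generic homotopy rel endpoints between generic immersed paths decomposes into isotopies, the three Reidemeister-type moves on the underlying curves, and crossing changes. The moves at the level of curves lift to genuine Reidemeister moves on diagrams after choosing crossing data appropriately. The remaining difficulty is that a crossing change on a diagram is not a legitimate move, but it can be replaced by a clasp move.

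\textbf{Step 3: express a crossing change via a clasp move.} Take a crossing $c$. Apply a second Reidemeister move next to $c$, pushing one strand across the other, to create a pair of crossings. Then perform a clasp move, which switches the over/under data of two adjacent crossings of equal sign. Finish with Reidemeister moves. The net effect is a single crossing change at $c$. Alternatively, I would argue directly that a crossing change alters $H_o$ and $H_u$ by the same amount, since the crossing keeps its sign and its homotopy index $h(c)$ while $o(c)$ flips.

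The main obstacle is exactly this crossing change step. The shortcut that a crossing change just swaps $o(c)$ fails: changing a crossing also changes its sign, so the crossing's contribution moves from $\sign(c)(h(c)-1)$ in $H_o$ to $-\sign(c)(h(c)-1)$ in $H_u$. This changes $\Delta H$ by $-2\sign(c)(h(c)-1)$, which is generally nonzero, so a single crossing change does not preserve $\Delta H$. The statement is therefore really about equivalence modulo clasp moves rather than arbitrary crossing changes. I would prove the corollary by showing that homotopy of the underlying paths in $\bar S$ coincides with equivalence generated by Reidemeister moves and clasp moves. In the style of the Goussarov--Habiro / $\Delta$-move argument for unknotting, I expect that any two long knots in $\bar S$ with homotopic projections are related by Reidemeister moves and $\Delta$- or clasp-type moves. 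The proof would then conclude using Proposition~\ref{prop:clasp_move} together with Theorem~\ref{thm:invariance}.
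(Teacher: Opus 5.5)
Your skeleton is the paper's: use the $FP_2(S)$-action to replace one based diagram so that $[\bar D_1]=[\bar D_2]$ in $\pi_1(\bar S,z_0,z_1)$, connect $D_1$ and $D_2$ by based isotopies, Reidemeister moves and clasp moves, and conclude with Theorem~\ref{thm:invariance} and Proposition~\ref{prop:clasp_move}. The real problem is your ``main obstacle'' paragraph. It rests on a sign error, and that error makes you abandon a correct argument.

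Let $c$ be an early overcrossing with $\sign(c)=\varepsilon$ and $h(c)=h$. After a crossing change, $c$ is an early undercrossing of sign $-\varepsilon$ with the same $h$, because the based half depends only on the underlying curve. No other crossing changes its sign, order index or homotopy index. Hence $H_o$ changes by $-\varepsilon(h-1)$ and $H_u$ also changes by $-\varepsilon(h-1)$, so $\Delta H=H_o-H_u$ is unchanged. You forgot that $H_u$ enters $\Delta H$ with a minus sign. Your claim also contradicts your own Step~3: a crossing change obtained from Reidemeister moves and a clasp move must preserve $\Delta H$ by Theorem~\ref{thm:invariance} and Proposition~\ref{prop:clasp_move}. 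Your ``alternative'' argument reaches the right conclusion for the wrong reason. The sign of $c$ does not stay the same; it flips, and that flip is exactly why $H_o$ and $H_u$ change by the same amount.

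Because of this, the decisive step of your write-up is left as ``I expect \dots'', with an appeal to Goussarov--Habiro and $\Delta$-moves that is both unproved and unnecessary. The step you need is elementary, and you already sketched it. A generic homotopy from $\bar D_1$ to $\bar D_2$ in $\bar S$ rel endpoints is a sequence of isotopies and $\Omega_1,\Omega_2,\Omega_3$ moves of the underlying curve. It never enters $U_0\cup U_1$, so no forbidden moves occur. Each such move lifts to a genuine Reidemeister move after some crossing changes, and the final diagram differs from $D_2$ only by crossing changes. Since crossing changes preserve $\Delta H$, you are done.

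Your description of the clasp move is also not the paper's. The move does not switch crossing data. It creates or deletes a pair $c_1,c_2$ of crossings between two strands, with $\sign(c_1)=\sign(c_2)$, where one strand is over at one crossing and under at the other. To realize a crossing change at $c$ with it, do the following. Apply an $\Omega_2$ move next to $c$ in which the strand that is under at $c$ passes over twice. Then $c$ and the adjacent new crossing form a clasp, which you delete. The remaining new crossing is $c$ with its over/under data switched. With this correction your Step~3 is right, and it supplies the justification that the paper's proof only asserts.
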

\begin{proof}
    Let $K_1$ and $K_2$ be knotoid such that $h(K_1)=h(K_2)$, and $D_1$ and $D_2$ based diagrams of $K_1$ and $K_2$. Since $h(K_1)=h(K_2)$ then $[\bar D_1]=f_*[\bar D_2]\in\pi_1(\bar S,z_0,z_1)$ for some $f\in Diff^+(S,U_0\cup U_1)\cap Diff^+_0(S)$. Then $D_2'=f(D_2)$ is a based diagram of $K_2$ such that $[\bar D_1]=[\bar D'_2]$. Below we assume that $D'_2=D_2$.

    Since $\bar D_1$ and $\bar D_2$ are homotopic in $\bar S$ with fixed ends, the diagrams $D_1$ and $D_2$ are connected by a sequence of based isotopies, Reidemeister moves and clasp moves. These moves do not change the skew homotopy index polynomial, hence, $\Delta H(D_1)=\Delta H(D_2)$.
\end{proof}

\begin{corollary}\label{cor:deltaHtrivial}
    Let $K$ be a knotoid whose homotopy type is trivial. Then $H_o(K)=H_u(K)$.
\end{corollary}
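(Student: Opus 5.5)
The plan is to apply Corollary~\ref{cor:deltaH} with a comparison knotoid whose homotopy type is trivial and whose polynomials vanish. Take $K_0$ to be the trivial knotoid. Its diagram is a based diagram $D_0$ without crossings: an embedded arc from $O_0$ to $O_1$ with $\bar D_0$ a fixed reference path from $z_0$ to $z_1$ in $\bar S$. Since $\mathcal C(D_0)=\emptyset$, we have $H_o(K_0)=H_u(K_0)=0$, so $\Delta H(K_0)=0$.

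Next we must make sense of "trivial homotopy type". The set $\pi_1(\bar S,z_0,z_1)$ is a torsor over $\pi_1(\bar S,z_0)$ and has no intrinsic identity element. The natural reading is that the trivial class is $h(K_0)$, the class of the reference path $\bar D_0$ chosen so that $D_0$ is a simple arc. Any two such reference arcs give the same class in $\bar\pi(S,z_0,z_1)$ by Theorem~\ref{thm:invariance}.1, because they are diagrams of the same trivial knotoid. So the hypothesis says precisely that $h(K)=h(K_0)$.

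With that, Corollary~\ref{cor:deltaH} gives $\Delta H(K)=\Delta H(K_0)=0$, that is, $H_o(K)-H_u(K)=0$, which is the claim.

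The only delicate point is the convention for "trivial homotopy type". If one prefers to avoid it, the same argument can be run directly. Assume $[\bar D]$ equals the class of a simple arc. As in the proof of Corollary~\ref{cor:deltaH}, after a based isotopy, based Reidemeister moves and clasp moves, $D$ becomes the crossingless diagram $D_0$. Proposition~\ref{prop:clasp_move} shows that clasp moves preserve $\Delta H$, and the Reidemeister moves preserve it by Theorem~\ref{thm:invariance}. Hence $\Delta H(D)=\Delta H(D_0)=0$. Either way, I do not expect any real obstacle beyond fixing this convention.
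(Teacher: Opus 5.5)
Your proposal is correct and follows the paper's proof: take the crossingless trivial knotoid $K_0$, note $\Delta H(K_0)=0$, and apply Corollary~\ref{cor:deltaH}. Reading ``trivial homotopy type'' as $h(K)=h(K_0)$ is the convention the paper uses implicitly; in the planar case it is made explicit by identifying paths with loops via the segment $\delta$.
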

\begin{proof}
    Let $K_0$ be the trivial knotoid. It has a diagram without crossings. Then $H_o(K_0)=H_u(K_0)=0$, and $\Delta H(K)=\Delta H(K_0)=0$. 
\end{proof}


\section{Homotopy polynomials for planar knotoids}\label{sec:planar_knotoids}

Let $S=\mathbb R^2$ be the plane. Choose the disks $U_0$ and $U_1$ with the centers $(0,0)$ and $(1,0)$ and the radii $\epsilon$ for a small $\epsilon>0$. Let $z_0=(\epsilon,0)$ and $z_1=(1-\epsilon,0)$. The fundamental group $\pi_1(\bar S,z_0)$ of $\bar S=\R^2\setminus(U_0\cup U_1)$ is a free group $\langle x,y\rangle$ in two generators $x,y$ (Fig.~\ref{fig:homotopy_generators}).
 
\begin{figure}[h]
    \centering
    \includegraphics[width=0.2\linewidth]{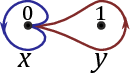}
    \caption{Generators of $\pi_1(\bar S,z_0)$}
    \label{fig:homotopy_generators}
\end{figure}

Let $\delta$ be the segment $z_0z_1$. We identify $\pi_1(\bar S,z_0,z_0)$ with $\pi_1(\bar S,z_0)$ by the formula $\gamma\mapsto \gamma\delta^{-1}$.

The pure braid group $P_2(\R^2)$ is $\Z$, and the pure framed group $FP_2(\R^2)$ is $\Z^3$. The group is generated by Dehn twists $\tau_0,\tau_1,\tau_{01}$ along the curves $\alpha_0,\alpha_1,\alpha_{01}$ in Fig.~\ref{fig:twist_curves}.

\begin{figure}[h]
    \centering
    \includegraphics[width=0.25\linewidth]{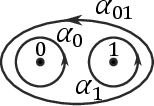}
    \caption{Dehn twist curves}
    \label{fig:twist_curves}
\end{figure}

The generators $\tau_{1}$ and $\tau_{01}$ act on $\pi_1(\bar S,z_0)$ by the identity, and $\tau_0$ acts by conjugation by $x$. On $\pi_1(\bar S,z_0,z_1)$, $\tau_{01}$ is identity, $\tau_0$ multiplies by $x$ from the left, and $\tau_1$ multiplies by $y$ from the right. Hence, 
\[
\bar\pi(S,z_0)=\langle x,y\rangle/Ad(x),\quad \bar\pi(S,z_0,z_1)=x\backslash\langle x,y\rangle/y.
\]
Any element in $\bar\pi(S,z_0)$ has a unique representative which either is equal to $x^k$ or starts with $y^\pm$. Any element in $\bar\pi(S,z_0,z_1)$  has a unique representative which is either $1$ or starts with $y^\pm$ and ends with $x^\pm$.

\begin{example}\label{exa:knotoid_K1}
   Consider the knotoid $K_1$ in Fig.~\ref{fig:planar_knotoid} . Its homotopy type is $h(K_1)=yx$, and the homotopy index polynomials are $H_o(K_1)=yx-1$, $H_u(K_1)=0$. Hence, $K_1$ is not trivial.

\begin{figure}[h]
    \centering
    \includegraphics[width=0.2\linewidth]{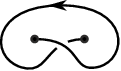}
    \caption{A planar knotoid $K_1$}
    \label{fig:planar_knotoid}
\end{figure}

The extended homotopy index polynomial is $\tilde H(K_1)=(yx-1,0,yx)$.

Note that the Gauss diagram of $K_1$ has one chord, hence, the polynomials which are defined using Gauss diagrams, such as the affine index polynomial $P_K(t)$~\cite{GK17}, the index polynomials $F_D(s,t)$ and $Z^n_D(t)$~\cite{KIL18}, the $F$-polynomial $F_D(u,v)$~\cite{FL24}, the three-value transcendental $H_D(t,y,z)$~\cite{FLV25}, and the coloring-allowed invariants $\mathcal A_D(x_1,\dots,x_n)$~\cite{CAL26}, they all vanish on $K_1$. Thus, the homotopy index polynomials are not weaker than all the aforementioned invariants.  
\end{example}

\begin{example}\label{exa:extended_polynomials}
Consider the knotoids $K_2$ and $K_2'$ in Fig.~\ref{fig:knotoid_for_extended}. Their homotopy index polynomials and homotopy types coincide:
\[
H_o(K_2)=H_u(K_2)=H_o(K'_2)=H_u(K'_2)=y-1,\quad h(K_2)=h(K'_2)=1.
\]
But the extended homotopy index polynomial of $K_2$ is
\[
\tilde H(K_2)=(y-1,y-1,y)=(y-1,y-1,1),
\]
and the extended homotopy index polynomial of $K'_2$ is
\[
\tilde H(K'_2)=(y-1,y-1,x^{-1})=(xyx^{-1}-1,xyx^{-1}-1,1).
\]
Thus, $K_2$ and $K'_2$ are different knotoids, and the extended homotopy index polynomial is stronger than the homotopy index polynomials.

\begin{figure}[h]
    \centering
    \includegraphics[width=0.5\linewidth]{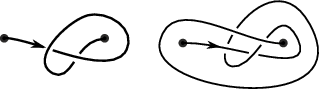}
    \caption{Knotoids $K_2$ (left) and $K_2'$ (right)}
    \label{fig:knotoid_for_extended}
\end{figure}
\end{example}



The plane reflection $\Psi$ relative to the line $z_0z_1$ induces an automorphism $\psi$ of $\pi_1(\bar S,z_0)$ which maps $x$ to $x^{-1}$ and $y$ to $y^{-1}$. 

\begin{proposition}\label{prop:symmetry_knotoid}
  For a planar knotoid $K$, we have $H_q(K^*)=-\psi(H_q(K))$, $H_q(\rot(K))=\psi(H_{\bar q}(K))$, $q\in\{o,u\}$.
\end{proposition}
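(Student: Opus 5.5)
We prove the first formula by a crossing-by-crossing comparison, as in Proposition~\ref{prop:mirror_knotoid}, and then deduce the second formula from it. Let $D$ be a based diagram of $K$ in the plane. Since $\Psi$ is the reflection in the line through $z_0$ and $z_1$, it preserves $U_0$, $U_1$, their centers $O_0,O_1$ and the radii $O_iz_i$. Hence $\Psi(D)$ is again a based diagram, and it represents $K^*$; we compute $H_q(K^*)$ directly from $\Psi(D)$. Reflection commutes with switching crossings, so $\Psi(\bar D)=\overline{\Psi(D)}$ is a based diagram of $\rot(K)=\bar K^*$.

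The map $\Psi$ gives a bijection $c\mapsto\Psi(c)$ between $\mathcal C(D)$ and $\mathcal C(\Psi(D))$. For each crossing we check three things.

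\begin{itemize}
\item[(a)] $\sign(\Psi(c))=-\sign(c)$. The reflection reverses the orientation of the plane, keeps the orientation of both strands, and keeps which strand is over.
\item[(b)] $o(\Psi(c))=o(c)$. Whether the crossing is first met as an under- or overcrossing depends only on the parametrization and the crossing data, and $\Psi$ preserves both.
\item[(c)] $h_{\Psi(D)}(\Psi(c))=\psi(h_D(c))$. The reflection maps the arc from $z_0$ to $c$ to the corresponding arc. The oriented smoothing is defined via orientations of the strands, and it is carried to the oriented smoothing at $\Psi(c)$. So the closed half is mapped to the closed half, and the based half satisfies $\widehat{\Psi(D)}_{\Psi(c)}=\Psi(\hat D_c)$. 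Its class in $\pi_1(\bar S,z_0)$ is therefore $\Psi_*[\hat D_c]=\psi[\hat D_c]$.
\end{itemize}

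Summing over crossings with $o=\mp1$ gives
\[
H_q(K^*)=\sum -\sign(c)\,(\psi(h_D(c))-1)=-\psi(H_q(K)),
\]
since $\psi$ fixes $1$ and extends linearly.

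The main point to check is that $\psi$ descends to $\bar\pi(S,z_0)=\langle x,y\rangle/Ad(x)$. This holds because $\psi(xgx^{-1})=x^{-1}\psi(g)x$, so $\psi$ maps $Ad(x)$-orbits to $Ad(x)$-orbits. More conceptually, $\Psi$ normalizes the action of $FP_2$, since conjugating a Dehn twist by a reflection gives the inverse twist. Without this, $\psi(h)$ would only be defined in $\pi_1(\bar S,z_0)$, not on the orbit set where $h$ lives. One should also confirm the stated formula for $\psi$: the generators $x,y$ are loops around $U_0,U_1$ that are symmetric or based on the axis, and the reflection reverses their orientation. This depends on the figure, and I would take it as given.

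For the rotation, apply the first formula to $\bar K$ and then Proposition~\ref{prop:mirror_knotoid}:
\[
H_q(\rot(K))=H_q(\bar K^*)=-\psi(H_q(\bar K))=-\psi(-H_{\bar q}(K))=\psi(H_{\bar q}(K)).
\]
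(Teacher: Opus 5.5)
Your proof is correct and follows the same route as the paper. You reflect a based diagram by $\Psi$, observe that crossing signs flip, order indices are preserved and based halves go to based halves (so $h$ transforms by $\psi$), and then derive the rotation formula from Proposition~\ref{prop:mirror_knotoid}. Your additional check that $\psi$ descends to the orbit set $\bar\pi(S,z_0)$ is a detail the paper leaves implicit.
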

\begin{proof}
    Let $D$ be a based diagram of $K$. Then $D^*=\Psi(D)$ is a based diagram of $K^*$. For a crossing $c\in\mathcal C(D)$, let $c^*$ denote the corresponding crossing in $\mathcal C(D^*)$. Then $\sign(c^*)=-\sign(c)$, $o(c^*)=o(c)$, $[\widehat{D^*}_{c^*}]=\psi[\hat D_c]$ that implies the first statement.

    The second statement follows from Proposition~\ref{prop:mirror_knotoid}.
\end{proof}

\begin{example}\label{exa:knotoid53}
Consider the knotoid $K_3$ in Fig.~\ref{fig:knotoid53} (knotoid $K7_{53}$ from the table~\cite{GC26}). In order to calculate the homotopy index of crossings, draw cuts which connect the tail and the head with the infinity and label them by $x$ and $y$. A counterclockwise intersection with a cut corresponds to multiplication by the label of the cut, and a clockwise one corresponds to multiplication by the inverse of the label. For example, the homotopy index of $c_3$, i.e. the homotopy class of its based half is $y^{-1}x^{-1}y^{-1}$ (Fig.~\ref{fig:knotoid53_based_half}).  
\begin{figure}[h]
    \centering
    \includegraphics[width=0.3\linewidth]{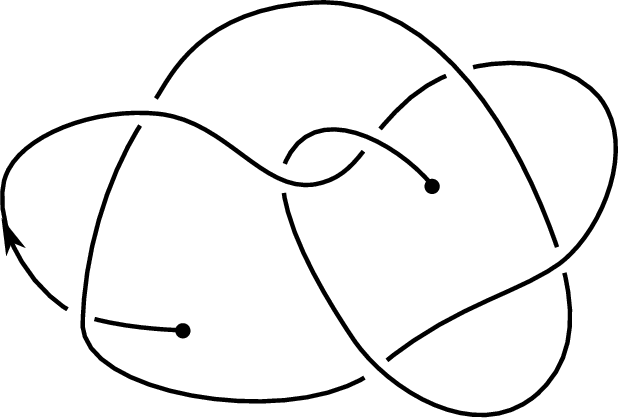}
    \caption{A planar knotoid $K_3$}
    \label{fig:knotoid53}
\end{figure}

\begin{figure}[h]
    \centering
    \includegraphics[width=0.35\linewidth]{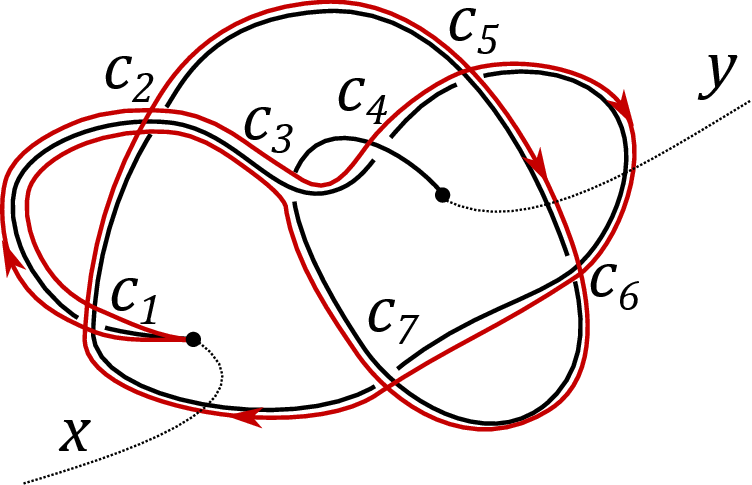}
    \caption{The based half of $c_3$}
    \label{fig:knotoid53_based_half}
\end{figure}

The indices of the crossings are the following
\[
\begin{array}{|c|c|c|c|}

\hline \mbox{crossing} & \mbox{sign} & \mbox{order index} &  \mbox{homotopy index}\\
\hline 
c_1 & +1 & -1 & y^{-1}x^{-1} \\
c_2 & +1 & +1 & y^{-1}x^{-1} \\
c_3 & +1 & +1 & y^{-1}x^{-1}y^{-1} \\
c_4 & +1 & -1 & y^{-1}x^{-1}y^{-1} \\
c_5 & +1 & -1 & y^{-1}x^{-1} \\
c_6 & +1 & +1 & y^{-1}x^{-1} \\
c_7 & +1 & -1 & y^{-1}x^{-1} \\
\hline
\end{array}
\]

Hence, 
\[
H_o(K_3)=2y^{-1}x^{-1}+y^{-1}x^{-1}y^{-1}-3,\quad H_u(K_3)=3y^{-1}x^{-1}+y^{-1}x^{-1}y^{-1}-4.
\]
The homotopy type of the knotoid is $h(K_3)=y^{-1}x^{-1}y^{-1}=y^{-1}x^{-1}\in x\backslash\langle x,y\rangle/y$.

The extended homotopy index polynomial of $K_3$ is
\begin{multline*}
    \tilde H(K_3)=\left(2y^{-1}x^{-1}+y^{-1}x^{-1}y^{-1}-3, 3y^{-1}x^{-1}+y^{-1}x^{-1}y^{-1}-4, y^{-1}x^{-1}y^{-1}\right)=\\
    \left(2y^{-1}x^{-1}+y^{-1}x^{-1}y^{-1}-3, 3y^{-1}x^{-1}+y^{-1}x^{-1}y^{-1}-4, y^{-1}x^{-1}\right)
\end{multline*}

In order to compute the homotopy polynomials of $-K_3$, consider the plane rotation $\tau$ by $\pi$ with the center $(\frac 12,0)$. Is swaps $U_0$ and $U_1$. Denote $\tilde\tau_*(\alpha)=\tau_*(\delta^{-1}\alpha\delta)$.  The map $\tilde\tau_*$ is an automorphism of $\pi_1(\bar S,z_0)$ such that $\tilde\tau_*(x)=y, \tilde\tau_*(y)=x$. For an element $\gamma\in\pi_1(\bar S,z_0,z_1)$, we have 
\[
\tau_*(\gamma^{-1}\alpha\gamma)=\tilde\tau_*(\delta\gamma^{-1}\alpha\gamma\delta^{-1})=\tilde\tau_*((\gamma\delta^{-1})^{-1}\alpha(\gamma\delta^{-1})), \quad \alpha\in\pi_1(\bar S,z_0).
\]
The element $\gamma\delta^{-1}\in\pi_1(\bar S,z_0)$ corresponds to $\gamma$ in the identification of $\pi_1(\bar S,z_0,z_1)$  with $\pi_1(\bar S,z_0)$. By Proposition~\ref{prop:reversed_knotoid} the first component of $\tilde H(-K_3)$ is
\begin{multline*}
    \tau_*\left([\bar K_3]^{-1}\left(3(y^{-1}x^{-1})^{-1}+(y^{-1}x^{-1}y^{-1})^{-1}-4\right)[\bar K_3] \right)=\\
    \tilde\tau_*\left((y^{-1}x^{-1}y^{-1})^{-1}(3xy+yxy-4)(y^{-1}x^{-1}y^{-1}) \right)=\\
    \tilde\tau_*\left(3yx+yxy-4\right)=3xy+xyx-4,
\end{multline*}
and the third component is calculated by the equality
\[
\tau_*(\gamma^{-1})\delta^{-1}=\tau_*(\gamma^{-1}\delta)=\tau_*(\delta^{-1}\delta\gamma^{-1}\delta)=\tilde\tau_*((\gamma\delta^{-1})^{-1}), \quad \gamma\in\pi_1(\bar S,z_0,z_1),
\]
applied to $[\bar K_3]\delta^{-1}=y^{-1}x^{-1}y^{-1}$. Thus, the extended homotopy index polynomial is equal to
\begin{multline*}
    \tilde H(-K_3)=\left(3yx+xyx-4, 2yx+xyx-3, xyx\right)=\\
   \left(3x^{-1}yx^2+yx^2-4, 2x^{-1}yx^2+yx^2-3, yx\right).    
\end{multline*}

Hence,
\begin{gather*}
H_o(-K_3)=3x^{-1}yx^2+yx^2-4=3yx+yx^2-4,\\ 
H_u(-K_3)=2x^{-1}yx^2+yx^2-3=2yx+yx^2-3. 
\end{gather*}

The homotopy index polynomials of the knotoids symmetric to $K_3$ are
\begin{gather*}
H_o(\bar K_3)=-3y^{-1}x^{-1}-y^{-1}x^{-1}y^{-1}+4,\quad H_u(\bar K_3)=-2y^{-1}x^{-1}-y^{-1}x^{-1}y^{-1}+3,\\
H_o(K^*_3)=-2yx-yxy+3,\quad H_u(K^*_3)=-3yx-yxy+4,\\
H_o(\rot(K_3))=3yx+yxy-4,\quad H_u(\rot(K_3))=2yx+yxy-3,
\end{gather*}

 Thus, the knotoid $K_3$ is not invertible, and the planar knotoids $K_3$, $\bar K_3$, $K^*_3$ and $\rot(K_3)$ are all different.
\end{example}

\begin{definition}
    A planar knotoid diagram $D$ is called \emph{knot-like} if the tail and the head belong to the outer region.

    A planar knotoid diagram $D$ is called \emph{proper} if the head or the tail belongs to the unbounded region. 
\end{definition}

\begin{proposition}\label{prop:knot-like_knotoid}
 1. If $K$ is a knot-like knotoid then $H_o(K)=H_u(K)=0$

2. If the tail of a knotoid $K$ belongs to the outer region then $h(K)=1$ and $H_o(K)=H_u(K)\in\Z[y,y^{-1}]$.    

3. If the head of a knotoid $K$ belongs to the outer region then $h(K)=1$ and $H_o(K)=H_u(K)\in\Z[x,x^{-1}]$.    
\end{proposition}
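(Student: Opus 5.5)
The idea is to put the diagram in a based position in which the endpoint lying in the outer region is joined to infinity by an arc that meets no part of the diagram. We then read off all homotopy classes using the cut description of Example~\ref{exa:knotoid53}.

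\emph{Part 2.} Suppose the tail lies in the unbounded region. By an isotopy of the plane (Proposition~\ref{prop:based_diagram}) we take a based diagram $D$ in which the cut from $U_0$ to infinity, labelled $x$, is disjoint from $D$ except at the radius $O_0z_0$. This is possible because the tail can be joined to infinity by a path avoiding $D$. We choose the isotopy so that this path is straightened onto the cut. Every loop built from pieces of $D$ then never crosses the $x$-cut, so its class in $\pi_1(\bar S,z_0)$ is a word in $y^{\pm1}$ only. In particular, each based half $\hat D_c$ has class $y^k$ for some $k$, so $h(c)\in\langle y\rangle$ and $H_o(K),H_u(K)\in\Z[y,y^{-1}]$.

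For the homotopy type, $[\bar D]$ is obtained from $\delta$ by multiplying on the right by some power of $y$: after the identification $\gamma\mapsto\gamma\delta^{-1}$ it is a word in $y$. Since $\bar\pi(S,z_0,z_1)=x\backslash\langle x,y\rangle/y$, this class is trivial, so $h(K)=1$. Corollary~\ref{cor:deltaHtrivial} then gives $H_o(K)=H_u(K)$.

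One point needs checking: that right multiplication by powers of $y$ really corresponds to the framed twist $\tau_1$ under our conventions, i.e.\ that the words in $y$ are exactly the classes of $\gamma\delta^{-1}$ for paths $\gamma$ not crossing the $x$-cut. This is the place where the choice of cuts has to match the generators in Fig.~\ref{fig:homotopy_generators}. If the orientation conventions instead produce a left factor, we can move the factor across, since $\langle y\rangle$-words $w$ satisfy $w\delta$ equal to $\delta$ times a power of the twist at $U_1$.

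\emph{Part 3} is symmetric. Here the $y$-cut avoids the diagram, so all loop classes lie in $\langle x\rangle$, and the class of $\bar D$ is a power of $x$ on the left, which is trivial in $\bar\pi(S,z_0,z_1)$. Alternatively, Part 3 follows from Part 2 applied to $-K$ via Proposition~\ref{prop:reversed_knotoid}, since $\tilde\tau_*$ swaps $x$ and $y$. The direct argument is cleaner, though, because conjugation by $\gamma$ is trivial when $\gamma$ is a power of $x$ in $\bar\pi(S,z_0)=\langle x,y\rangle/Ad(x)$.

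\emph{Part 1.} If both endpoints lie in the outer region, both cuts can be chosen disjoint from $D$. Then every $\hat D_c$ crosses neither cut, so $h(c)=1$ for every crossing, and both sums vanish termwise. Equivalently, Parts 2 and 3 together give $H_o(K)=H_u(K)\in\Z[y^{\pm1}]\cap\Z[x^{\pm1}]$. Since $\bar\pi(S,z_0)$ has the normal form in which $x^k$ and words starting with $y^{\pm}$ are distinct, this intersection is $\Z\cdot 1$. Moreover, the augmentation of every $h(c)-1$ is $0$, which forces the polynomials to be $0$.

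The main obstacle is the simultaneous choice of both cuts disjoint from $D$, and in Part 2 the justification that the cut can be made to avoid the diagram while $D$ stays based. The approach for the latter is to pull the arc from the tail to infinity onto the ray through $z_0$ by an ambient isotopy supported away from $U_1$. For Part 1 the second argument (intersection of the two group rings plus the augmentation) avoids the simultaneous choice entirely, so I would use it.
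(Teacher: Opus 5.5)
Your Parts 2 and 3 follow the paper exactly. You keep the cut at the outer endpoint off the diagram, read every based half and $\bar D\delta^{-1}$ as a word in the other generator, note that such powers are trivial in $x\backslash\langle x,y\rangle/y$, and then apply Corollary~\ref{cor:deltaHtrivial}. The left/right point you flag is not an issue: the class is $y^k$ itself, which lies in the double coset of $1$ whichever side $y$ acts on.

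The real difference is in Part 1. The paper keeps both cuts off the diagram at once, so every based half is contractible and the sums vanish term by term. You instead intersect the conclusions of Parts 2 and 3. By the normal form in $\bar\pi(S,z_0)=\langle x,y\rangle/Ad(x)$, the spans of the classes $[y^k]$ and $[x^k]$ in $\Z[\bar\pi(S,z_0)]$ meet only in $\Z\cdot 1$, and the augmentation kills the constant. This is valid: $H_o(K)$ and $H_u(K)$ are well-defined elements of $\Z[\bar\pi(S,z_0)]$, so the two parts may use differently standardized based diagrams, and you only ever need one cut at a time.

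The paper's route is shorter and proves more, namely $h(c)=1$ for every crossing of a knot-like diagram. The simultaneous choice you avoid is also harmless. The outer region together with $\infty$ is an open disk, so disjoint arcs from the tail and the head to $\infty$ exist. Cutting $\bar S$ along both leaves a disk, so a single diffeomorphism fixing $U_0\cup U_1$ carries them onto the standard cuts.

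One small slip in your side remark on deriving Part 3 by reversal: there the conjugating element is a power of $y$. The conjugation is trivial because $\langle y\rangle$ is abelian, not because of $Ad(x)$.
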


\begin{proof}
Let us draw cuts which connect the tail and the head with the infinity and label them by $x$ and $y$. We will calculate the homotopy index of crossings by tracking intersections with the cuts as in Example~\ref{exa:knotoid53}.

If $K$ is knot-like then the based halves do not intersect the halves, i.e. they are contractible: $h(c)=1$ for all the crossings. Hence, $H_o(K)=H_u(K)=0$.

If the tail of $K$ lies in the outer region, then the based halves of crossings do not intersect $x$-cut. Hence, for any crossing $c$, $h(c)$ is a power of $y$. Hence, $H_o(K)$ and $H_u(K)$ are polynomials in $y$. By the same reason $h(K)=y^k$ for some $k\in\Z$. But in $y^k=1$ in $\bar\pi(S,z_0,z_1)$. Then by Corollary~\ref{cor:deltaHtrivial}  $H_o(K)=H_u(K)$.

The third statement is proved analogously.
\end{proof}

\section{Homotopy polynomials for spherical knotoids}\label{sec:spherical_knotoids}

Let $S=S^2$ be the sphere. The inclusion $\R^2\hookrightarrow\R^2\cup\{\infty\}=S^2$ induces a homomorphism of the fundamental groups
\[
\xymatrix{
\pi_1(\bar\R^2,z_0) \ar[r]\ar@{=}[d] & \pi_1(\bar S^2,z_0)\ar@{=}[d]\\
\langle x,y\rangle  \ar[r] & \langle x\rangle
}
\]
given by the formula $x\mapsto x, y\mapsto x^{-1}$. The pure framed group acts trivially on $\pi_1(\bar S^2,z_0)$ and acts on $\pi_1(\bar S^2,z_0,z_1)$ by multiplication by $x$. Hence, $\bar\pi(S^2,z_0)=\pi_1(\bar S^2,z_0)=\langle x\rangle$ and $\bar\pi(S^2,z_0,z_1)=\{1\}$.

\begin{proposition}\label{prop:spherical_knotoid_symmetry}
    Let $K$ be a spherical knotoid. Then
\begin{enumerate}
    \item the homotopy type of $K$ is trivial: $h(K)=1$;
    \item $H_o(K)=H_u(K)$; we will denote the coinciding homotopy index polynomials by $H(K)$.
    \item $H(-K)=H(K)$, $H(\bar K)=-H(K)$, $H(K^*)(x)=-H(K)(x^{-1})$.
\end{enumerate}
\end{proposition}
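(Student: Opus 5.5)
Items 1 and 2 follow from facts already established; item 3 needs the symmetry propositions of Section 2 pushed down to the sphere. For item 1, the discussion before the statement shows that $\bar\pi(S^2,z_0,z_1)=\{1\}$: the framed braid generator acts on $\pi_1(\bar S^2,z_0,z_1)\cong\langle x\rangle$ (a torsor over $\langle x\rangle$) by multiplication by $x$, so the action is transitive. Hence $h(K)\in\bar\pi(S^2,z_0,z_1)$ is automatically the trivial class, and $h(K)=1$. For item 2, apply Corollary~\ref{cor:deltaHtrivial} with $S=S^2$. Since $h(K)=1$, we get $\Delta H(K)=0$, that is, $H_o(K)=H_u(K)$. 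The corollary was proved for an arbitrary oriented surface, through Proposition~\ref{prop:clasp_move} and Corollary~\ref{cor:deltaH}, so it applies here without change.

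For item 3, the mirror identity comes from Proposition~\ref{prop:mirror_knotoid}: $H(\bar K)=H_o(\bar K)=-H_u(K)=-H(K)$. For reversal, I will use Proposition~\ref{prop:reversed_knotoid}. The first component of $\tilde H(-K)$ is $\sum_{o(c)=-1}\sign(c)\bigl(\tau_*(\gamma^{-1}[\hat D_c]^{-1}\gamma)-1\bigr)$. On the sphere, $\pi_1(\bar S^2,z_0)\cong\langle x\rangle$ is abelian, so conjugation by $\gamma$, after identifying basepoints, acts trivially. It remains to find how $\tau_*$, composed with this identification, acts on the generator. Here $\bar S^2$ is an annulus, and $\tau\in Diff^+_0(S^2)$ swaps its two boundary circles. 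Concretely, $\tau$ is the rotation by $\pi$, and by the planar computation $\tilde\tau_*(x)=y$. Since $y\mapsto x^{-1}$ in the sphere, $\tau$ sends the loop $x$ around $U_0$ to a loop around $U_1$, which equals $x^{-1}$ in the annulus. Therefore each summand becomes $\sign(c)\bigl((h(c)^{-1})^{-1}-1\bigr)=\sign(c)(h(c)-1)$. This gives $H_o(-K)=H_u(K)=H(K)$, and hence $H(-K)=H(K)$.

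For $K^*$, I will rerun the proof of Proposition~\ref{prop:symmetry_knotoid} with $\Psi$ the reflection of $S^2$ fixing the great circle through $z_0,z_1$. It preserves the based structure, so $D^*=\Psi(D)$ is a based diagram. It reverses signs, preserves the order index, and sends $[\hat D_c]$ to $\psi[\hat D_c]$, where now $\psi(x)=x^{-1}$: this is consistent with the planar $\psi$ together with $y\mapsto x^{-1}$. Hence $H(K^*)(x)=-H(K)(x^{-1})$.

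The main obstacle is the sign bookkeeping in the reversal case. There are two inversions to track: the based half of $-c$ is traversed backwards, which gives $h^{-1}$, and $\tau$ exchanges the boundary components of the annulus, which inverts the generator again. These must cancel to give $H(-K)=H(K)$ rather than $H(K)(x^{-1})$. I would check this first directly on the planar formulas of Example~\ref{exa:knotoid53}, mapped to the sphere via $y\mapsto x^{-1}$, to make sure the orientation conventions agree.
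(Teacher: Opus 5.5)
Your proposal is correct and follows essentially the same route as the paper. Item 1 comes from $\bar\pi(S^2,z_0,z_1)=\{1\}$, item 2 from Corollary~\ref{cor:deltaHtrivial}, and item 3 from Propositions~\ref{prop:mirror_knotoid} and~\ref{prop:reversed_knotoid} together with the reflection $\Psi$ inducing $x\mapsto x^{-1}$. In particular, your cancellation of the two inversions (reversing the based half, and $\tau_*$ inverting the generator of the annulus, with conjugation trivial because the group is abelian) is exactly the paper's observation that $\phi$ is the identity on the sphere.
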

\begin{proof}
    The second statement follows from Corollary~\ref{cor:deltaHtrivial}.

    The reflection of the sphere $\Psi$ relative to the line $z_0z_1$ induces the inversion map $\psi\colon x\mapsto x^{-1}$ on $\bar\pi(S^2,z_0)$. Hence, $H(K^*)(x)=-\psi(H(K))(x)=-H(K)(x^{-1})$.

    By Proposition~\ref{prop:reversed_knotoid}, $H(-K)=\phi(H(K))$ where the map $\phi$ is the composition of three maps: 1) inversion map $x\mapsto x^{-1}$; 2) conjugation with $[D]$ which is trivial thanks to the commutativity of $\bar\pi(S^2,z_0)$; 3) the map $\tau_*$ induced by a map $\tau$ which swaps the disks $U_0$ and $U_1$. For the sphere, $\tau_*$ is the inversion map. Hence, $\phi$ is the identity, and $H(-K)=H(K)$.
\end{proof}

\begin{proposition}\label{prop:spherical_polynomial_reduction}
    Let $K$ be a spherical knotoid $K$. Then 
\begin{enumerate}
    \item  $H(K)(x)=F_K(x,1)=F_K(1,x)$ where $F_K(x,y)$ is the two-variable polynomial defined in~\cite{KIL18};
    \item the affine index polynomial is equal to $P_K(t)=H(K)(t)+H(K)(t^{-1})$.
\end{enumerate}    
 
\end{proposition}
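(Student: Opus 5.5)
The plan is to compare crossing by crossing. Each of the three polynomials is a signed sum over $\mathcal C(D)$, so it suffices to identify, for every crossing $c$, the spherical homotopy index $h_D(c)\in\langle x\rangle$ with a power of $x$ determined by the Gauss-diagram data used in~\cite{KIL18} and~\cite{GK17}.

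\textbf{Step 1: compute $h_D(c)$ in $\langle x\rangle$.} Fix a based diagram $D$ in $S^2$. Since $\pi_1(\bar S^2,z_0)=\langle x\rangle\cong\Z$ is generated by a loop around $U_0$, the class of the based half $\hat D_c$ is $x^{n}$, where $n$ is the algebraic intersection number of $\hat D_c$ with a cut from $O_0$ to $O_1$. The $x$-cut and the $y$-cut of Section~\ref{sec:planar_knotoids} merge through $\infty$ into such a cut, which agrees with $y\mapsto x^{-1}$. The tail piece $\gamma\cdots\gamma^{-1}$ cancels, so $n$ equals the intersection number of the closed half $D^{cl}_c$ with the arc $\delta'$ joining the tail to the head.

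Next, close $D$ by $\delta'$ to obtain a closed curve $\hat D$. The intersection number of $D^{cl}_c$ with $\delta'$ equals the intersection number of $D^{cl}_c$ with the complementary half $D\setminus D^{cl}_c$, up to sign, because on the sphere every closed curve is null-homologous. So $D^{cl}_c\cdot\hat D=0$, and the self-intersections of $D^{cl}_c$ contribute zero by antisymmetry. This count is exactly the intersection index $i(c)$ of~\cite{KIL18}, equivalently $\pm Ind_D(c)=\pm w_D(c)$ from Example~\ref{exa:index}. I would write
\[
h_D(c)=x^{\epsilon(c)\, i(c)},
\]
where $\epsilon(c)$ is a sign depending on whether the closed half is the left or the right half and on $o(c)$.

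\textbf{Step 2: pin down $\epsilon(c)$.} This is the main obstacle. The sign conventions (left/right half, early over/under, orientation of the cut) have to be matched against the definition of $F_K(s,t)$ in~\cite{KIL18}. That definition is
\[
F_K(s,t)=\sum_c \sign(c)\,(s^{i(c)}-1)\ \text{or}\ (t^{i(c)}-1),
\]
depending on whether $c$ is an early undercrossing or an early overcrossing, with normalization $\sum_c \sign(c)\bigl(s^{?}t^{?}-1\bigr)$. I would check the signs on a one-crossing example such as the spherical image of $K_1$, and then argue that the local convention is uniform over all crossings.

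Once Step 2 is settled, we obtain $H_o(K)=F_K(1,x)$ and $H_u(K)=F_K(x,1)$, possibly up to $x\mapsto x^{-1}$. That substitution is absorbed by the symmetry of $F$, or by Proposition~\ref{prop:spherical_knotoid_symmetry}. Proposition~\ref{prop:spherical_knotoid_symmetry}(2) gives $H_o=H_u=H(K)$, which yields item 1.

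\textbf{Step 3: item 2.} By Example~\ref{exa:index}, $w_D(c)=-o(c)\,i(c)$. Splitting the sum defining $P_K$ into early over- and early undercrossings gives
\[
P_K(t)=\sum_{o(c)=1}\sign(c)\,(t^{-i(c)}-1)+\sum_{o(c)=-1}\sign(c)\,(t^{i(c)}-1).
\]
By Step 1, one of these sums equals $H_o$ evaluated at $t^{\pm1}$ and the other equals $H_u$ evaluated at $t^{\mp1}$. Using $H_o=H_u=H(K)$, this gives $P_K(t)=H(K)(t)+H(K)(t^{-1})$. The consistency of the sign $\epsilon$ between the over- and under-parts is again the delicate point. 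The opposite exponents are forced by $w=-o\cdot i$, so the result is symmetric in $t\leftrightarrow t^{-1}$ regardless of which global sign is chosen.
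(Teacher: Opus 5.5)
Your overall route is the paper's: identify $h_D(c)$ with $x^{i(c)}$ crossing by crossing, substitute into the definition of $F_K$, and split $P_K$ by $o(c)$ using $w_D(c)=-o(c)\,i(c)$. Your Step~1 actually gives more justification than the paper does: the class in $\langle x\rangle$ is the algebraic intersection with a tail--head arc, which on $S^2$ equals $D^{cl}_c\cdot D$ up to a fixed sign.

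The first gap is the sign $\epsilon$. You leave it open and then dismiss it incorrectly. A global substitution $x\mapsto x^{-1}$ is \emph{not} absorbed by any symmetry. Proposition~\ref{prop:spherical_knotoid_symmetry} gives $H(-K)=H(K)$, $H(\bar K)=-H(K)$ and $H(K^*)(x)=-H(K)(x^{-1})$, none of which says $H(K)(x)=H(K)(x^{-1})$. In Example~\ref{exa:knotoid53_spherical} one has $H(K_3)=x-1$, which is not invariant under $x\mapsto x^{-1}$; that is exactly how $K_3$ is distinguished from $\rot(K_3)$. So if $\epsilon=-1$, item~1 would become the different identity $H(K)(x)=F_K(x^{-1},1)$. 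The sign has to be fixed by matching conventions. The paper does this by reading the exponent off directly as $k=D^c\cdot D=i(c)$: the generator $x$ is oriented so that the winding of the closed half around the tail is counted exactly as the intersection index of~\cite{KIL18}. Your proposed check on a one-crossing example cannot detect the sign anyway, because there $i(c)=0$ (for $K_1$ on the sphere, $h=yx\mapsto 1$).

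The second gap is that you allow $\epsilon(c)$ to depend on $o(c)$, and then Step~3 is not justified. For instance, $\epsilon(c)=o(c)$ would give $H_o(x)=\sum_{o(c)=1}\sign(c)(x^{i(c)}-1)$ and $H_u(x)=\sum_{o(c)=-1}\sign(c)(x^{-i(c)}-1)$. Hence $P_K(t)=H_o(t^{-1})+H_u(t^{-1})=2H(K)(t^{-1})$, and your remark about ``either global sign'' does not cover this case. The missing observation is that both $h_D(c)$ and $i(c)=D^c\cdot D$ are determined by the underlying curve and the closed half alone, with no over/under data, so $\epsilon$ is a single global constant. The $o(c)$-dependence you were worried about belongs to $Ind_D(c)=w_D(c)$, which uses the signed half $D^+_c$ and equals $-o(c)\,i(c)$. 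Your ``equivalently $\pm Ind_D(c)$'' conflates the two quantities. With $\epsilon$ global, Step~3 goes through for either value, and item~1 reduces to the single convention check above. Assigning $s,t$ to the crossing classes opposite to the paper's choice is harmless, since $H_o=H_u$.
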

\begin{proof}
 1.  Let $D$ be a based diagram of $K$. For a crossing $c\in\mathcal C(D)$, $h(c)=x^k$ where $k=D^c\cdot D=i(c)$ is the intersection index~\cite{KIL18}. Hence,
\[
H_o(K)(x)=\sum_{c\colon o(c)=1}\sign(c) (h(c)-1)=\sum_{c\in\mathcal C_o(D)}\sign (x^{i(c)}-1)=F_D(x,1).
\]
Analogously, one proves that $H_u(K)(x)=F_D(1,x)$.

2. For a crossing $c\in\mathcal C(D)$, $w(c)=-o(c)i(c)$. Then
\begin{multline*}
    P_D(t)=\sum_{c\in\mathcal C(D)}\sign(c)(t^{w(c)}-1)=\sum_{c\in\mathcal C(D)}\sign(c)(t^{-o(c)i(c)}-1)=\\
    \sum_{c\colon o(c)=1}\sign (t^{-i(c)}-1)+\sum_{c\colon o(c)=-1}\sign (t^{i(c)}-1)=\\
    H_o(K)(t^{-1})+H_u(K)(t)=H(K)(t)+H(K)(t^{-1}).
\end{multline*}
\end{proof}

\begin{example}\label{exa:knotoid53_spherical}
Consider the knotoid $K_3$ in Example~\ref{exa:knotoid53} as a spherical knotoid. The substitution $y\mapsto x^{-1}$ implies that the homotopy index polynomials are 
\[
H(K_3)=x-1.
\]
Next,  $H(\bar K_3)=-x+1$, $H(K^*_3)=-x^{-1}+1$, and $\rot(K_3)=x^{-1}-1$. Thus, $K_3$, $\bar K_3$, $K^*_3$ and $\rot(K_3)$ are different spherical knotoids.

Note that the affine index polynomial has the same value on $K_3$ and $\rot(K_3)$, hence,  for spherical knotoids, homotopy index polynomial is stronger than the affine index polynomial.
\end{example}


\section*{Acknowledgments}

The author is grateful to Denis Ilyutko for fruitful discussions.


\begin{thebibliography} {100}

\bibitem{Afanasiev} D.\,M. Afanasiev, Refining virtual knot invariants by means of parity, {\em Sb. Math.}, {\bf 201}:6 (2010), pp. 3--18.

\bibitem{BG12} P. Bellingeri, S. Gervais, Surface framed braids, \emph{Geom Dedicata} {\bf 159} (2012) 51--69.

\bibitem{C} P. Cahn, A generalization of Turaev's virtual string obracket and self-intersections of virtual strings, {\em Communications in Contemporary Mathematics} {\bf 19}:4 (2017) 1650053.

\bibitem{CAL26} H. Chen, J. An, F. Li,  Coloring-allowed Invariants and the 4-phases Functions of Knotoids, arXiv:2606.10362.

\bibitem{Cheng} Z. Cheng, A polynomial invariant of virtual knots, {\em Proc. Amer. Math. Soc.} {\bf 142}:2 (2014) 713--725.

\bibitem{Cheng21} Z. Cheng, The chord index, its definitions, applications and generalizations, {\em Canad. J. Math.} {\bf 73}:3 (2021) 597--621.

\bibitem{CGX} Z. Cheng, H. Gao and M. Xu, Some remarks on the chord index,  {\em J. Knot Theory Ramifications} {\bf 29}:10 (2020) 2042003.

\bibitem{CFGMX} Z. Cheng, D.A. Fedoseev, H. Gao, V.O. Manturov, M. Xu, From chord parity to chord index, \emph{ J. Knot Theory Ramifications} {\bf 29}:13 (2020) 2043004.

\bibitem{FL24} Y. Feng and F. Li, The $F$-polynomial invariant for knotoids, J. Knot Theory Ramifications {\bf 33} (2024), no.~11, Paper No. 2450032, 22 pp.

\bibitem{FLV25} W. Feng, F. Li and A.~Y. Vesnin, A three-variable transcendental invariant of planar knotoids via Gauss diagrams, Mediterr. J. Math. {\bf 22} (2025), no.~4, Paper No. 99, 24 pp.

\bibitem{F} T. Fiedler, A small state sum for knots, \emph{Topology} {\bf 32}:2 (1993) 281--294.

\bibitem{FK} L.C. Folwaczny, L.H. Kauffman, A linking number definition of the affine index polynomial and applications, \emph{ J. Knot Theory Ramifications} {\bf 22}:12 (2013) 1341004.

\bibitem{GC26} B. Gabrov\u{s}ek, P. Cavicchioli, A table of knotoids in  up to seven crossings, arXiv:2603.06335.

\bibitem{GDS19} D. Goundaroulis, J. Dorier, A. Stasiak, A systematic classification of knotoids on the plane and on the sphere, arXiv:1902.07277.

\bibitem{GK17} N. G\"ug\"umc\"u{} and L.~H. Kauffman, New invariants of knotoids, European J. Combin. {\bf 65} (2017), 186--229.

\bibitem{Henrich}  A. Henrich, A sequence of degree one Vassiliev invariants for virtual knots, {\em J. Knot
Theory Ramifications}, {\bf 19}:4 (2010) 461--487.

\bibitem{IMN11} D.\,P. Ilyutko, V.\,O. Manturov, I.\,M. Nikonov, Parity in knot theory and graph-links, {\em M.: PFUR}, {\bf 41}:6 (2011), pp. 3--163.

\bibitem{IMN14} D.\,P. Ilyutko, V.\,O. Manturov, I.\,M. Nikonov, Virtual Knot Invariants Arising From Parities, {\em Banach Center Publ.}, {\bf 100} (2014), pp. 99--130.

\bibitem{IMN15} D.\,P. Ilyutko, V.\,O. Manturov, I.\,M. Nikonov, Parity and Patterns in Low-dimensional Topology, {\em Reviews in Mathematics and Mathematical Physics, Cambridge Scientific Publishers}, 2015.

\bibitem{ILL} Y.H. Im, K. Lee, S.Y. Lee, Index polynomial invariant of virtual links, \emph{J. Knot Theory Ramifications} {\bf 19} (2010) 709--725.

\bibitem{IPS} Y.H. Im, K.I. Park, M.H. Shin, Parities and polynomial invariants for virtual links, \emph{ J. Knot Theory Ramifications} {\bf 23}:12 (2014) 1450066.

\bibitem{J} M. J. Jeong, A zero polynomial of virtual knots, \emph{J. Knot Theory Ramifications} {\bf 25}:1 (2016) 1550078.

\bibitem{K2} L. H. Kauffman, An affine index polynomial invariant of virtual knots, {\em J. Knot Theory Ramifications} {\bf 22}:4 (2013) 1340007.

\bibitem{KPV} K. Kaur, M. Prabhakar, A. Vesnin, Two-variable polynomial invariants of virtual knots arising from flat virtual knot invariants,  {\em J. Knot Theory Ramifications} {\bf 27}:13 (2018) 1842015.

\bibitem{KIL18} S. Kim, Y.~H. Im and S. Lee, A family of polynomial invariants for knotoids, J. Knot Theory Ramifications {\bf 27} (2018), no.~11, 1843001, 15 pp.

\bibitem{KM} V.\,A. Krasnov, V.\,O. Manturov, Graph--valued invariants of virtual and classical links and minimality problem, {\em J. Knot Theory Ramifications}, {\bf 22}:12 (2013).

\bibitem{Mparity} V.\,O.~Manturov, Parity in Knot Theory, {\em  Sb. Math.} {\bf 201}:5-6 (2010) 693--733.

\bibitem{Nwp} I. Nikonov, Weak parities and functorial maps, {\em Topology}, SMFN, {\bf 51}, {\em M.: PFUR}, 2013, pp. 123--141.

\bibitem{Ntribe} I. Nikonov, Crossing tribes of tangles in a thickened surface, arXiv:2110.12446.

\bibitem{Nind} I. Nikonov, Crossing indices, traits and the principle of indistinguishability, arXiv:2110.14218.

\bibitem{Nif} I. Nikonov, Intersection formulas for parities on virtual knots,, J. Knot Theory Ramifications {\bf 32} (2023), no.~05, 2350033.

\bibitem{Next} I. Nikonov, Delta-move and the extended homotopy index polynomial, to appear in Journal of  Knot Theory  and its Ramifications.

\bibitem{ST} S. Satoh, K. Taniguchi, The writhes of a virtual knot, \emph{Fund. Math.} {\bf 225} (2014) 327--341.

\bibitem{T} V.G. Turaev, Virtual strings, \emph{Ann. Inst. Fourier (Grenoble)}, {\bf 54}:7 (2004) 2455--2525.

\bibitem{Turaev} V. Turaev, Knotoids, \emph{Osaka J. Math.} {\bf 49} (2012) 195--223.

\end{thebibliography}
\end{document}